\documentclass{amsart}[12 pt]
\usepackage{hyperref}
\usepackage{mathtools}
\usepackage{amsfonts}
\usepackage{amssymb}
\usepackage{amsthm}

\usepackage[shortlabels]{enumitem}
\usepackage{microtype}
\usepackage{fullpage}
\usepackage{mathrsfs}
\usepackage{mleftright}
\mleftright

\NewDocumentCommand{\R}{}{\mathbb{R}}

\NewDocumentCommand{\Z}{}{\mathbb{Z}}

\NewDocumentCommand{\XXd}{}{(X,d)}

\NewDocumentCommand{\MetricXXd}{o o}{\rho_{\XXd}\IfValueT{#1}{\left(#1\IfValueT{#2}{,#2}\right)}}

\NewDocumentCommand{\CzinftySpace}{o}{C_0^\infty \IfValueT{#1}{\left( #1 \right)}}
\NewDocumentCommand{\CinftySpace}{o}{C^\infty \IfValueT{#1}{\left( #1 \right)}}

\NewDocumentCommand{\Norm}{m o}{\left\| #1 \right\|\IfValueT{#2}{_{#2}}}
\NewDocumentCommand{\LtNorm}{m o}{\Norm{#1}[L^2\IfValueT{#2}{(#2)}]}

\newcommand{\clarke}{\partial_{\mathrm C}}
\newcommand{\ip}[2]{\left\langle #1,#2\right\rangle}

\begin{document}

\newtheorem{theorem}{Theorem}[section]
\newtheorem{corollary}[theorem]{Corollary}
\newtheorem{proposition}[theorem]{Proposition}
\newtheorem{lemma}[theorem]{Lemma}
\newtheorem{conjecture}[theorem]{Conjecture}
\newtheorem{problem}[theorem]{Problem}

\theoremstyle{remark}
\newtheorem{remark}[theorem]{Remark}

\theoremstyle{definition}
\newtheorem{definition}[theorem]{Definition}

\theoremstyle{remark}
\newtheorem{example}[theorem]{Example}

\theoremstyle{remark}
\newtheorem{question}[theorem]{Question}

\numberwithin{equation}{section}

\title{A Counterexample to Fourier Alignment in Single-Neuron Modular Addition}
\author{Gautam Neelakantan Memana}
\email{neelanmemana@gmail.com}
\date{\today}
\begin{abstract}
    We give a negative solution to the problem raised in \cite{levineMAISO60}. We first present a simple construction in which an initially active ReLU neuron reaches a completely inactive state in finite time and freezes at a limit whose Fourier energy is distributed equally among all nonzero real frequency classes. The counterexample holds on an open set of initial conditions, and hence on an event of positive Gaussian probability. We include an appendix by GPT 5.6 Sol that further strengthen the counterexample by showing that failure can occur for every Clarke trajectory from an open set of initial conditions, under the convention ($\operatorname{ReLU}'(0)=0$), for smooth dead-zone approximations of ReLU, and for fixed-step full-batch gradient descent. Thus single-frequency alignment is not a general consequence of training a single neuron on modular addition.
\end{abstract}
\maketitle

\section{Introduction}
Modular arithmetic has become an important test case for understanding how neural networks discover structured algorithms. In their study of grokking, Nanda, Chan, Lieberum, Smith, and Steinhardt \cite{nanda2023progress} reverse-engineered small transformers trained on modular addition and showed that the learned computation is naturally described in Fourier space: the network embeds the inputs as rotations on a circle and combines them using trigonometric identities. More recently, He, Wang, Chen, and Yang \cite{he2026mechanism} developed a theoretical account of feature learning in two-layer networks for modular addition. In the regimes they study, individual neurons tend to select single-frequency Fourier features, with frequencies competing according to their initial spectral magnitudes and phase alignments. 

These results motivate the broader representation theoretic problem formulated by Levine in \cite{maisA5_2026}. The goal is to understand which irreducible representations are selected by the training dynamics (and with what probabilities?) when a neural network is trained to perform multiplication in a finite group. For the cyclic group $\Z/p\Z$, the irreducible representations are precisely the Fourier characters, grouped into conjugate real frequency classes. The question raised in \cite{levineMAISO60} isolates an especially simple case: a single ReLU neuron trained by exact full-table cross-entropy on addition in $\Z/p\Z$, without weight decay. It asks whether, conditional on being initially active, the normalized weights must converge to a direction supported on a single nonzero real Fourier class (see \eqref{intro::nonzero real fourier class}). 
The main purpose of the present article is to show that this conclusion fails using a simple counterexample presented in Section \ref{short counterexample}. Then, in the Appendix \ref{appendix} we establish several strengthened counterexamples showing that the failure persists under a number of natural continuous and discrete training dynamics. 

The conclusion is not that Fourier structure is absent, but that single-frequency, or monosemantic, Fourier structure is not forced by the training dynamics. The two mechanisms exhibited here are familiar ones: a unit may die, or it may remain active while memorizing a single table entry. The persistent branch realizes, in the smallest possible model, the
memorization side of the memorization-to-generalization dichotomy associated with grokking. Both mechanisms use an exact dead region of the activation, so the corresponding question remains open for everywhere-positive activations such as softplus. We also refer the reader to \cite{maisO92_2026} which is a quantitative successor to \cite{levineMAISO60}.

The permanently inactive behavior belongs to the classical dying-ReLU
phenomenon \cite{lu2020dying}. A complementary literature studies the implicit bias and directional convergence of gradient descent on separable data and in homogeneous models \cite{soudry2018implicit,lyu2020gradient,ji2020directional}. Those general results do not apply to the trajectories constructed here, because the training loss does not tend to zero: on the persistent trajectory it converges to $\frac{p^2-1}{p^2}\log p$, while on the dead trajectories it freezes at a positive value. For modular addition itself, Zhong et al. showed
that the same task can support qualitatively different Clock and Pizza mechanisms \cite{zhong2023clock}. Morwani et al. derived Fourier-feature
emergence from margin maximization in a stylized model
\cite{morwani2024feature}; their modular-addition theorem assumes width
$m\geq4(p-1)$, so the case $m=1$ studied here lies outside that margin
mechanism. Finally, the value assigned to $\operatorname{ReLU}'(0)$ is an
automatic-differentiation convention whose numerical consequences have been
studied explicitly \cite{bertoin2021numerical}.

\subsection{The problem}
Let $C_p:=\mathbb{Z}/p\mathbb{Z}$, where $p\geq 5$ is prime. A
single-neuron network has parameters
\[
u,v,w\in\mathbb{R}^{C_p}.
\]
For an input $(a,b)\in C_p\times C_p$, define its gate, or preactivation, by
\[
z_{a,b}:=u(a)+v(b),
\]
and its activation by
\[
h_{a,b}:=\operatorname{ReLU}(z_{a,b}),
\qquad
\operatorname{ReLU}(s):=\max{s,0}.
\]
We call the $(a,b)$ gate active, inactive, or at the kink according as
$z_{a,b}$ is positive, negative, or zero. Thus, the term ``the $(a,b)$
gate'' refers to the scalar preactivation $z_{a,b}=u(a)+v(b)$. For the
input $(a,b)$ and output class $c\in C_p$, the corresponding logit is
$
h_{a,b}w(c).
$

The full-table cross-entropy loss is
\begin{align}
\mathcal{L}(u,v,w)
&=
\frac{1}{p^2}
\sum_{a,b\in C_p}
\left[
\log\left(
\sum_{c\in C_p}e^{h_{ab}w(c)}
\right)
-
h_{ab}w(a+b)
\right].
\label{intro::loss function}
\end{align}
There is no weight decay. For a real-valued function $x:C_p\to\mathbb{R}$, we write
\begin{align}\label{intro::renormalization}
x_0
=
x-\frac{1}{p}
\left(\sum_{a\in C_p}x(a)\right)\mathbf{1}
\end{align}
for the recentered version of \(x\). For every
\(1\leq\zeta\leq (p-1)/2\), let \(\Pi_\zeta\) denote the orthogonal
projection onto the real Fourier space
\begin{align}
E_\zeta
:=
\operatorname{span}_{\mathbb{R}}
\left\{
a\mapsto
\cos\left(\frac{2\pi\zeta a}{p}\right),
\quad
a\mapsto
\sin\left(\frac{2\pi\zeta a}{p}\right)
\right\}.
\label{intro::real Fourier class}
\end{align}
This space corresponds to the conjugate pair of complex characters
\begin{align}\label{intro::nonzero real fourier class}
    [\rho_\zeta]=\{\rho_\zeta,\rho_{-\zeta}\},\qquad\rho_\zeta(a):=e^{2\pi i\zeta a/p},
\end{align}
and will be called a nonzero real Fourier class. 

Whenever
$
\|u_0\|^2+\|v_0\|^2+\|w_0\|^2>0,
$
the combined Fourier-energy fraction of the class
\([\rho_\zeta]\) is
\begin{align}\label{intro::combined fraction}
F_\zeta(u,v,w) =\frac{\|\Pi_\zeta u_0\|^2+\|\Pi_\zeta v_0\|^2+\|\Pi_\zeta w_0\|^2}{
\|u_0\|^2+\|v_0\|^2+\|w_0\|^2}.
\end{align}
\begin{definition}\label{intro:: purity}
   For \(\delta>0\), we say that the parameter triple
\((u,v,w)\) is \((\delta,[\rho_\zeta])\)-pure if
\[
F_\zeta(u,v,w)\geq 1-\delta.
\] 
\end{definition}

\begin{remark}
    Definition \ref{intro:: purity} essentially says that all but at most a $\delta$-fraction of centered Fourier energy is contained in the single real Fourier class \([\rho_\zeta]\).
\end{remark}
The ReLU function is differentiable away from the origin, with
\[
\operatorname{ReLU}'(s)
=
\begin{cases}
0, & s<0,\\
1, & s>0,
\end{cases}
\]
but it is not differentiable at \(s=0\). The point \(s=0\) is called
the \emph{ReLU kink}. Its Clarke generalized derivative is
\[
\partial_C\operatorname{ReLU}(0)=[0,1].
\]
We use the Clarke generalized-gradient framework and its standard calculus
rules \cite[Chapter~2]{clarke1990}. Accordingly, the continuous-time training
dynamics are interpreted as the Clarke differential inclusion
\begin{align}\label{intro::clarke inclusion}
\dot{\theta}(t)\in-\partial_C\mathcal{L}(\theta(t)),
\qquad
\theta(t):=(u(t),v(t),w(t)).
\end{align}
A \textit{measurable assignment of complete Clarke trajectories} means a Borel-measurable choice of one complete Clarke trajectory for each initial condition. We say that the neuron is \emph{active at initialization} if
\begin{align*}
u(a)+v(b)>0
\end{align*}
for at least one pair \((a,b)\in C_p\times C_p\).

\begin{problem}[\cite{levineMAISO60}]\label{intro:: the problem}
Condition on the event that the neuron is active at initialization.
Prove or refute the following statement: almost surely on this event,
the normalized parameters
\begin{align*}
\frac{(u(t),v(t),w(t))}
{\|(u(t),v(t),w(t))\|}
\end{align*}
converge as \(t\to\infty\), and there exists a nonzero frequency
\(\zeta\) such that the limiting direction is
\((\delta,[\rho_\zeta])\)-pure for every \(\delta>0\).
\end{problem}

Since the same frequency \(\zeta\) is required to work for every
\(\delta>0\), the purity condition is equivalent to
\[
F_\zeta=1.
\]
In other words, the entire centered Fourier energy of the limiting
direction must lie in a single nonzero real Fourier class. For further
background on this problem and its representation-theoretic
motivation, we refer the reader to \cite{maisA5_2026}.
\begin{remark}
    One can easily show that a single neuron cannot assign different active inputs to the different values of $(a+b\pmod p)$, and hence cannot achieve perfect accuracy on the modular-addition table. The Problem \ref{intro:: the problem} therefore is not whether this neuron can learn the complete modular-addition algorithm, but whether the training dynamics nevertheless select a single Fourier frequency in the normalized parameter direction.
\end{remark}
Now, we state the main theorem. 
\begin{theorem}\label{introduction::main theorem}
    For every prime $p\ge 5$, there is a nonempty set $U\subset \R^{3p}$ consisting of initially active parameters, and a measurable assignment of complete Clarke trajectories such that every selected trajectory starts in $U$ and reaches a stationary dead-neuron state in finite time and its normalized limiting direction is not supported on any one nonzero real Fourier class. Moreover, 
    \begin{align*}
    \mathbb{P}(\theta(0)\in U|\ \text{initially active})>0.
\end{align*}
Consequently, the almost-sure assertion in Problem \ref{intro:: the problem} fails under the measurable Clarke-trajectory convention. 
\end{theorem}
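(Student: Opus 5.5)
The plan is to build one explicit trajectory in a symmetric subspace where the computation is essentially one-dimensional, and then to get an open set of initial conditions from transversality and continuous dependence on the data.

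\emph{Symmetric base point.} Let $e_0$ be the indicator of $0\in C_p$ and $\mathbf 1$ the all-ones vector. I start from
\[
u=v=-M\mathbf 1+(M+\varepsilon/2)e_0,\qquad w=-\gamma e_0,
\]
with $M\gg 1$, $0<\varepsilon\ll 1$ and $\gamma\gg 1$. Then $z_{0,0}=\varepsilon>0$, every other gate is at most $-M+\varepsilon/2<0$, and only the $(0,0)$ gate is active, so the parameters are initially active.

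While $(0,0)$ is the only active gate, $\mathcal L$ is smooth near the trajectory. Writing $h=z_{0,0}$ and $q=\operatorname{softmax}(hw)$, the flow is
\begin{align*}
\dot u(0)=\dot v(0)&=\tfrac1{p^2}\bigl(w(0)-\langle q,w\rangle\bigr),\\
\dot w&=-\tfrac1{p^2}\,h\,(q-e_0),
\end{align*}
and all other coordinates of $u$ and $v$ are frozen. The flow is equivariant under permutations of $C_p\setminus\{0\}$, so $u$, $v$ and $w$ each stay in $\operatorname{span}\{\mathbf 1,e_0\}$. Since $w(0)<\langle q,w\rangle$ while $w(0)$ is the strict minimum of $w$, we get
\[
\dot z_{0,0}\le -c\gamma/p^2<0,
\]
where $c$ is a constant, as long as $w(0)$ stays well below $0$. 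Meanwhile $|\dot w|\lesssim h\le\varepsilon$. A comparison argument then gives a hitting time $T\lesssim p^2\varepsilon/\gamma$ at which $z_{0,0}=0$. Up to $T$, $w$ has moved by $O(\varepsilon^2/\gamma)$, so $w(0)\approx-\gamma$ persists. The gates $(0,b)$ and $(a,0)$ only decrease, so they stay negative.

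\emph{Freezing at $T$.} At time $T$ exactly one gate sits at the kink and all the others are strictly negative. Using the Clarke chain rule \cite[Chapter~2]{clarke1990}, $\partial_C\mathcal L$ at this point contains the element corresponding to the choice $s=0\in\partial_C\operatorname{ReLU}(0)$. Because $h_{0,0}=0$, the $w$-gradient also vanishes there, so $0\in\partial_C\mathcal L(\theta(T))$. I then select the constant continuation $\theta(t)=\theta(T)$ for $t\ge T$, which is a complete Clarke trajectory and a stationary dead-neuron state.

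\emph{Fourier energy of the limit.} In the limit, the centered parts $u_0$, $v_0$, $w_0$ are all multiples of $(e_0)_0$, and $w_0\neq 0$. The centered delta has Parseval energy split equally among the $(p-1)/2\ge 2$ real classes. Hence $F_\zeta=2/(p-1)<1$ for every $\zeta$, and the limiting direction is not supported on any single nonzero real Fourier class.

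\emph{Open set.} Let $U$ be a small ball around the base point. On $U$, the set of gates $(a,b)\ne(0,0)$ stays bounded away from $0$ and $z_{0,0}>0$. The pre-hitting dynamics is a smooth ODE. The crossing of $\{z_{0,0}=0\}$ is transversal, since $\dot z_{0,0}\le -c\gamma/p^2$ uniformly on the ball. By the implicit function theorem, $T$ and $\theta(T)$ are therefore continuous on $U$. The ``freeze at first hitting time'' selection is then Borel, in fact continuous, as a map from initial condition to trajectory. Each $F_\zeta(\theta(T))$ depends continuously on the initial point and equals $2/(p-1)$ at the center. Shrinking $U$ gives $F_\zeta<1$ for all $\zeta$ on $U$, and the same freezing argument applies at each $\theta(T)$, because only one gate is at the kink there. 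Finally, $U$ is a nonempty open set contained in the initially active region, so it has positive Gaussian measure, and the conditional probability is positive.

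\emph{Main obstacle.} I expect the hardest step to be the quantitative comparison that $z_{0,0}$ reaches $0$ before $w(0)$ rises enough to reverse the sign of $w(0)-\langle q,w\rangle$. Related to this is keeping that estimate uniform on $U$, which is what makes the transversality and continuity argument work. I would handle it with a bootstrap: on $[0,T]$ one has $w(0)\le-\gamma/2$, which forces $T\le 2p^2\varepsilon/(c\gamma)$, which in turn keeps the drift of $w$ below $\gamma/2$.
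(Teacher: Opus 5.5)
Your proposal is correct and follows the paper's proof essentially step for step. Both use the same two-level base point with only the $(0,0)$ gate active, the same one-gate chamber dynamics, freezing with slope $0\in\partial_C\operatorname{ReLU}(0)$ at the hitting time, the flat $2/(p-1)$ spectrum of $e_0-\frac1p\mathbf 1$, and robustness via transversality and continuous dependence. The one difference is how you keep the gap $w(j)-w(0)$ bounded below: you use a bootstrap that needs $\varepsilon/\gamma$ small, while the paper uses the exact first integral $g^2-\frac{p}{2(p-1)}s^2=\mathrm{const}$, which gives the sharp condition $A^2>\frac{p}{2(p-1)}\varepsilon^2$; your estimate has the mild advantage of applying directly to perturbed, non-symmetric initial data.

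One small repair: the Clarke chain rule only gives an outer inclusion, so it does not by itself put $0$ in $\partial_C\mathcal L(\theta(T))$. Justify this directly instead: $\mathcal L\equiv\log p$ on the nearby open region where all gates are negative, so $0$ is a limit of gradients.
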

\begin{remark}
    For $p=2$, the centered real function space is one-dimensional and equals the
unique nonzero Fourier component. For $p=3$, it is two-dimensional and equals
the single conjugate frequency class. Thus single-frequency purity is
automatic for $p=2,3$, independently of the dynamics. The first genuinely
multifrequency case is $p=5$.
\end{remark}
\begin{remark}
    It seems likely that a theorem similar to Theorem \ref{introduction::main theorem} can be proven for the \textit{Purity} problem raised in \cite{L+26b} for the symmetric group $S_3$ using ideas from Section~\ref{short counterexample}, but we do not pursue it in this article. 
\end{remark}

\textbf{AI tool disclosure and acknowledgements.} In keeping with the principles of transparency, attribution, and human responsibility articulated in the \href{https://leidendeclaration.ai}{Leiden Declaration on Artificial Intelligence and Mathematics}, the author discloses the following uses of artificial-intelligence tools. All results in this article, except those presented in Appendix~\ref{appendix}, were worked out and written by the author, with helpful input from conversations with GPT-5.6 Sol, including assistance with the literature review and with understanding and clarifying the problem. The strengthened results in Appendix~\ref{appendix} were developed through conversations with GPT-5.6 Sol, which also produced the initial draft of the appendix; the author subsequently reviewed and edited the text. 

The author also thanks Lionel Levine for the encouragement to post this article as part of the MAIS project.
\bibliographystyle{amsalpha}
\section{Proof of \ref{introduction::main theorem}}
\begin{proof}
Fix numbers $\varepsilon, A,B>0$ such that 
\begin{align*}
    B>\varepsilon/2
, \quad A^2 > \frac{p}{2(p-1)}\varepsilon^2.\end{align*}
Consider the initial condition 
\begin{align}\label{proof::initial condition}
    &u(0)=v(0)=\varepsilon/2, \quad u(a)=v(a)=-B \ (a\neq 0), \quad w(0)=-A, \quad w(c)=0\ (c\neq 0).
\end{align}
It is easy to verify that exactly one of the table entry is active as $u(0)+v(0)=\varepsilon>0$, whereas 
\begin{align*}
    u(0)+v(b) = u(a)+v(0)=\frac{\varepsilon}{2}-B<0,
\end{align*}
for $a, b\neq 0$, and $u(a)+v(b)=-2B<0$ when $a, b\neq 0$. 

As long as the $(0,0)$ gate remains positive, all other gates contribute zero gradient. By symmetry, the wrong output coordinates remain equal. Set 
\begin{align*}
    s=u(0)+v(0), \quad a=w(0), \quad b=w(j) \ (j\neq 0), \quad g=b-a. 
\end{align*}
Initially $s(0)=\varepsilon$ and $g(0)=A$. For the sole active example, whose label is $0$, its softmax probability of the correct class is
\begin{align*}
    \pi_0:=\frac{e^{sa}}{e^{sa}+(p-1)e^{sb}} =\frac{1}{1+(p-1)e^{sg}}.
\end{align*}
We can use $\pi_0$ to simplify the ODEs in the gradient descent algorithm corresponding to the loss function \eqref{intro::loss function}. Using basic chain rule we get 
\begin{align}
    \dot{s}&=-\frac{2}{p^2} (1-\pi_0)g, \label{proof::ode s}\\
    \dot{a}&=\frac{s}{p^2}(1-\pi_0),\label{proof::ode a}\\ 
    \dot{b}&=-\frac{s}{p^2}\frac{1-\pi_0}{p-1},\\ 
    \dot{g}&=-\frac{s}{p(p-1)}(1-\pi_0).  \label{proof::ode g}
\end{align}
Now, we make the following important observation. 
\begin{align}\label{proof::equation satisfied by g and s}
    g(t)^2-\frac{p}{2(p-1)}s(t)^2=A^2-\frac{p}{2(p-1)}\varepsilon^2. 
\end{align}
This is not difficult to see. Whenever $s>0, g>0$, the division of \eqref{proof::ode g} by \eqref{proof::ode s} yields
\begin{align*}
    \frac{dg}{ds}=\frac{ps}{2(p-1)g}.
\end{align*}
Thus,
\begin{align*}
    \frac{d}{ds}\left(g^2-\frac{p}{2(p-1)}s^2\right)=0,
\end{align*}
which, along with the initial values, gives us \eqref{proof::equation satisfied by g and s}. 
\\

Put 
\begin{align*}
    \Gamma=A^2-\frac{p}{2(p-1)}\varepsilon^2>0. 
\end{align*}
\eqref{proof::equation satisfied by g and s} shows that $g(t)\ge \sqrt{\Gamma}$ while $s\geq 0$. Moreover, $s(t)g(t)\geq 0$, and so softmax $\pi_0\leq \frac{1}{p}$ and therefore
\begin{align*}
    1-\pi_0\geq \frac{p-1}{p}.
\end{align*}
Consequently,
\begin{align*}
    \dot{s}\leq -\frac{2(p-1)}{p^3}\sqrt{\Gamma}<0. 
\end{align*}
Hence, $s$ reaches $0$ at a finite time $T$, with explicit upper bound 
\begin{align*}
    T\leq \frac{p^3\varepsilon}{2(p-1)\sqrt{\Gamma}}.
\end{align*}
But, observe that $g(T)=\sqrt{\Gamma}>0$. Moreover, the hit is transverse, i.e., 
\begin{align}\label{proof::tranversely}
    \lim_{t\to T^{-}} \dot{s}(t)= -\frac{2(p-1)}{p^3}\sqrt{\Gamma}<0.
\end{align}
We can thus conclude that no other gate can turn on before $T$. Indeed $u(0)$ and $v(0)$ decrease at the same rate, all $u(a), v(b)$ with nonzero index remain fixed, and every initially inactive pre-activation is therefore constant or decreasing. Since $u(0)-v(0)$ is conserved and initially zero, 
\begin{align}\label{proof::at T}
    u(0,T)=v(0,T)
=0.\end{align}
At time $T$, choose the Clarke derivative of the $\textrm{ReLU}$ at zero to be $0$. Every pre-activation is then non-positive and every active value is zero. The output derivatives vanish because they contain a factor $h_{ab}$, and the input -weight derivatives vanish by the chosen Clarke slope. Therefore
\begin{align*}
    (u(t), v(t),w(t))=(u(T), v(T), w(T))\quad (t\ge T),
\end{align*}
hence we have a complete Clarke trajectory. 
\subsection{Fourier spectrum of the frozen state}
Using \eqref{proof::at T} 
\begin{align*}
    u(T)=v(T)=(0,-B, \cdots, -B),
\end{align*}
and hence the centered normalizations are
\begin{align*}
    u(T)_0=v(T)_0=B \left[e_0-\frac{1}{p}\mathbf{1}\right],
\end{align*}
where $e_0$ is the point mass at $0$. Since $g(T)=b(T)-a(T)>0$, we get
\begin{align*}
    w(T)_0=-g(T)\left[e_0-\frac{1}{p}\mathbf{1}\right].
\end{align*}
Thus, all three centered limiting vectors are nonzero multiples of the same centered point mass $q:=\left[e_0-\frac{1}{p}\mathbf{1}\right]$. Now, taking the unitary Fourier transform gives 
\begin{align*}
    \hat{q}(0)=0, \quad \hat{q}(\zeta)=\frac{1}{\sqrt{p}} \ (\zeta\neq 0). 
\end{align*}
Therefore, $\|q\|^2=(p-1)/p$ and the real class $[\rho_{\zeta}]=\{\zeta, -\zeta\}$ contains $2/p$ energy. Therefore any nonzero real Fourier class captures exactly 
\begin{align*}
    \frac{2/p}{(p-1)/p}=\frac{2}{p-1}
\end{align*}
of the energy of $q$. Since the three centered vectors $u(T)_0, v(T)_0, w(T)_0$ are scalar multiples of $q$, their combined fraction \eqref{intro::combined fraction} is
\begin{align*}
    F_{\zeta}(u(T), v(T), w(T))=\frac{2}{p-1}\quad \forall \ \zeta\neq 0. 
\end{align*}
For $p\ge 5$, this number is strictly less than $1$. In particular, for any 
\begin{align*}
    0<\delta< 1-\frac{2}{p-1},
\end{align*}
the limiting direction is not $(\delta, [\rho_{\zeta}])$-pure for any frequency class. 
\subsection{One trajectory to positive probability}
A single perfectly symmetric initialization has Gaussian probability zero, so one final robustness is required. Let $\theta_*$ denote the initial condition \eqref{proof::initial condition}. Up to the hitting time $T$, its trajectory lies in the open smooth chamber
\begin{align*}
    u(0)+v(0), \quad u(a)+v(b)<0\quad ((a,b)\neq (0,0)).
\end{align*}
On this chamber the vector field is smooth. The trajectory reaches the boundary $u(0)+v(0)$ transversely by \eqref{proof::tranversely}, while all other boundary inequalities regain negative margins. Standard continuous dependence for smooth ODEs, together with the implicit-function theorem for a transverse hitting time, therefore gives a neighborhood $U_0$ of $\theta_*$ such that every initial condition in $U_0$:
\begin{itemize}
    \item begins with exactly one active gate;
    \item follows the same smooth chamber until a unique nearby transverse hitting time;
    \item reaches the hitting time before any other gate becomes active; and 
    \item has a hitting endpoint continuously close to $\theta(T)$.
\end{itemize}
For each of these trajectories, choose ReLU slope at $0$ hitting gate and keep the trajectory constant afterward. At the base endpoint, the maximum of the finitely many Fourier fractions is $\frac{2}{p-1}<1$. The fractions are continuous wherever combined centered norm is nonzero. After shrinking this neighborhood if necessary, set $U:U_0$ on which every frozen endpoint still satisfies $\max_{\zeta}F_{\zeta}<1$. Thus no selected limiting direction from $U$ is supported on one frequency class. 

Finally, independent standard Gaussians have a strictly positive probability density on all of $\R^{3p}$ and hence every open set has a positive probability. Since $U$ consists entirely of initially active parameters, 
\begin{align*}
    \mathbb{P}(\theta(0)\in U|\ \text{initially active})>0.
\end{align*}
On this positive probability event the normalized weights converge, but the limit is not pure. The selected trajectories on $U$ depend measurably on the initial condition. They may therefore be patched with any measurable assignment outside $U$, producing a global measurable assignment for which the conclusion of Problem \ref{intro:: the problem} fails on an event of positive conditional probability. Alternatively, Theorem \ref{thm:app-dead} removes the selection issue entirely, since every Clarke trajectory from its open set of initial conditions fails single-frequency alignment. This proves Theorem \ref{introduction::main theorem}.
\end{proof}\label{short counterexample}
\appendix 
\section{Strengthened counterexamples}\label{appendix}

\noindent\textit{This appendix is adapted from a note drafted by GPT-5.6 Sol in conversation with the author (see \cite{gpt56sol2026strengthened}). The author subsequently reviewed and edited the statements and proofs.}

This appendix records stronger versions of the preceding counterexample. The
first construction remains persistently active and is independent of the
choice of Clarke selection. The remaining results show robustness under the
zero-at-the-kink convention, smooth dead-zone activations, and fixed-step
full-batch gradient descent.

\subsection{Strengthened statements}

\begin{theorem}[Persistent, selection-independent Clarke counterexample]
\label{thm:app-persistent}
Let $p\geq5$ be a prime. There is a nonempty open set
$U_{\mathrm{slide}}\subset\R^{3p}$ such that, for every initial condition
$\theta_0\in U_{\mathrm{slide}}$, the Clarke differential inclusion
\eqref{intro::clarke inclusion} has a unique complete trajectory. This trajectory
satisfies:
\begin{enumerate}[label=\textup{(\roman*)}]
\item exactly one input, after relabeling $(0,0)$, has strictly positive ReLU
activation, while all cross inputs $(0,j)$ and $(j,0)$ with $j\neq0$
eventually lie on ReLU kink faces;
\item $u(0,t)+v(0,t)\to+\infty$ and $\Norm{\theta(t)}\to\infty$;
\item $\theta(t)/\Norm{\theta(t)}$ converges;
\item for every nonzero real Fourier class,
\[
 \lim_{t\to\infty}F_\zeta(u(t),v(t),w(t))=\frac2{p-1}.
\]
\end{enumerate}
In particular, every Clarke trajectory from every point of
$U_{\mathrm{slide}}$ fails single-frequency alignment. The set has positive
probability under independent standard Gaussian initialization, even after
conditioning on initial activity.
\end{theorem}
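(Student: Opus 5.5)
The plan is to build the open set around a symmetric base point and then track a one-dimensional reduced system. In this system the $(0,0)$ gate stays strictly active and keeps growing, while the cross gates $(0,j)$ and $(j,0)$ are pushed onto their kinks and slide there. The base point is
\[
u(0)=v(0)=\varepsilon/2,\qquad u(a)=v(a)=-B\ (a\neq0),\qquad w(0)=A>0,\qquad w(c)=0\ (c\neq 0),
\]
with $B$ slightly larger than $\varepsilon/2$. The sign of $w(0)$ is now reversed relative to the proof of Theorem~\ref{introduction::main theorem}, so the correct logit of the sole active example $(0,0)$ (label $0$) grows.

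On the chamber where only $(0,0)$ is active, the ODEs of the previous section apply with $g=w(j)-w(0)<0$. Then $\dot s>0$, $\dot g<0$, $s\to\infty$ and $sg\to-\infty$. The key mechanism is what happens to a cross gate $z_{0,j}=u(0)+v(j)$ when it becomes (or tries to become) active. Its label is $j\neq 0$, so the derivative of the loss with respect to $z_{0,j}$ on the active side is
\[
\frac{1}{p^2}\Bigl(\sum_c \pi_c w(c)-w(j)\Bigr),
\]
which is positive because $w(0)$ dominates. Hence the active side of the face $\{z_{0,j}=0\}$ pushes $z_{0,j}$ down. On the inactive side, the only motion is $\dot u(0)>0$ coming from the $(0,0)$ term, which pushes $z_{0,j}$ up. The face is therefore attracting from both sides, which gives a sliding mode.

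The main step, and the one I expect to be hardest, is to show that once the $2(p-1)$ cross gates reach their faces, the Clarke inclusion has a unique solution. On the faces this solution is the sliding solution: it chooses slopes $\lambda_{0,j},\lambda_{j,0}\in[0,1]$ so that $\frac{d}{dt}z_{0,j}=\frac{d}{dt}z_{j,0}=0$, with all non-cross gates kept strictly negative. I would write the constraint as a linear system in the $\lambda$'s. By the $C_p^\times$-symmetry after averaging, and by the coupling $u(0)$ with all $v(j)$, this system is a rank-one perturbation of a diagonal matrix. I would show it has a unique solution in the open cube $(0,1)^{2(p-1)}$ as long as the ratio $\dot u(0)/(\text{active-side push})$ lies in $(0,1)$. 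This is exactly the strict two-sided attraction above, and the asymptotics of the softmax quantities should give it uniformly in $t$.

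Uniqueness of the whole Clarke trajectory would then follow from a one-sided Lipschitz/transversality argument. First, any Clarke solution starting near the face must stay on it, because the face is attracting from both sides. Second, on the face the velocity set meets the tangent space in the single point just found. Off the faces, the vector field is smooth, so uniqueness there is standard. For openness, I would reuse the perturbation argument of the previous section: continuous dependence up to the transverse arrival at the faces, combined with strict negativity margins of the gates $u(a)+v(b)$ with $a,b\neq0$. These margins only improve, since on the faces $v(j)=-u(0)$ and $u(a)=-v(0)$ decrease as $s$ grows. Positive Gaussian probability conditional on activity then follows as before.

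For (ii)–(iv) I would analyze the asymptotics of the reduced system. On the sliding face the parameters are
\[
u=x\,e_0-y(\mathbf 1-e_0),\qquad v=y\,e_0-x(\mathbf 1-e_0),
\]
up to perturbation-dependent constants, together with $w=w(0)e_0+\bar w(\mathbf 1-e_0)$. The off-zero coordinates equalize only in the limit for perturbed data, and the bounded deviations become negligible after normalization. The dynamics of $(s,g)$ reduce to an ODE of the form $\dot s\sim c_1|g|(1-\pi_0)$ and $\dot g\sim -c_2 s(1-\pi_0)$ with $1-\pi_0\approx (p-1)e^{sg}$. This is like logistic-loss implicit bias: $s,|g|\sim\sqrt{\log t}$ with a fixed ratio determined by the conserved quantity. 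From this I get $\|\theta\|\to\infty$ and convergence of $(x,y,w(0)-\bar w)$ in direction. Each centered component $u_0,v_0,w_0$ is then asymptotically a multiple of $q=e_0-\frac1p\mathbf1$. The Fourier computation of the previous section then gives $F_\zeta\to 2/(p-1)<1$ for every $\zeta$, which is (iv).
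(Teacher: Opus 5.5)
\textbf{Gap: the sliding coefficients need not lie in $[0,1]$.} Your architecture matches the paper's: one strictly active central gate with $w(0)$ dominant, cross gates sliding on their kinks with coefficients from a rank-one linear system, the full face $u=(a,-b,\ldots,-b)$, $v=(b,-a,\ldots,-a)$, the balance invariant fixing $d/x$, and the flat spectrum of $q$. But the inequality that makes sliding possible is missing, and it fails near your base point.

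Write $\Lambda:=w(0)-\sum_c q_c w(c)=\sum_{j\neq0}q_j\,(w(0)-w(j))$ for $p^2$ times the drift of $u(0)$ coming from the central example. This is the paper's $A$, not your $A=w(0)$. Write $\gamma_j=\mu-w(j)$ for the kink slope of the example $(0,j)$. Positivity of $\gamma_j$ only says that this example pushes $z_{0,j}$ down. It must also beat the upward drift transmitted through $u(0)$. With $k$ captured row gates, the sliding equations force $\alpha_j\gamma_j=\Lambda/(k+1)$, so admissibility $\alpha_j\le1$ means $\Lambda\le(k+1)\gamma_j$. Any perturbation of your symmetric point makes the hitting times distinct, so the binding case is $k=1$, requiring $\Lambda<2\gamma_j$. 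Your stated condition, ratio in $(0,1)$, is the even stronger $\Lambda<\gamma_j$.

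With $B$ only slightly above $\varepsilon/2$, capture happens while $x\approx\varepsilon$ and $w$ has barely moved. There
\[
\frac{\Lambda}{\gamma_j}\approx\frac{p(p-1)}{e^{\varepsilon w(0)}+p-1},
\]
which is close to $p-1\geq4$ when $\varepsilon w(0)$ is small. Every Clarke velocity at the first captured kink then has $\dot r_j\geq(\Lambda-2\gamma_j)/p^2>0$. So that gate crosses into the active region and a second input becomes strictly active, which contradicts (i). Your continuous-dependence argument for openness then has no sliding regime to pass through. The appeal to ``asymptotics of the softmax quantities'' only gives $\Lambda\to0$ as $x\to\infty$, which says nothing at the early impact times. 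Your attraction-based uniqueness argument rests on the same missing inequality.

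\textbf{How the paper closes it.} The paper builds the inequality into the set and makes it monotone in time. $U_{\mathrm{slide}}$ is defined directly by the open conditions $x>X$, $u(a)+v(b)<0$ for $(a,b)\neq(0,0)$, and $\gamma_j>\gamma_*$, with $X>(p-1)/(e\gamma_*)$. Then $\Lambda\leq\sum_j d_je^{-xd_j}\leq(p-1)/(ex)$, while $x$ increases and $\dot\gamma_j=xq_j/p^2>0$. Hence $\alpha_j=\Lambda/((k+1)\gamma_j)<1$ for every $k$ at every time, and no perturbation argument is needed. In your notation, you must take $\varepsilon w(0)$ large, not small.

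\textbf{Two smaller points.} First, a single-face attraction argument does not directly handle points where several cross faces meet. The paper instead shows that each positive-slope kink term is locally semiconvex, so the Clarke subdifferential is hypomonotone and Gronwall gives uniqueness on every stratum at once. Second, on the full face the relations $u(j)=-v(0)$ and $v(j)=-u(0)$ hold exactly. Only the wrong-class spread of $w$ depends on the perturbation, and you need the fact that its diameter is nonincreasing to make it negligible after normalization.
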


\begin{theorem}[Every Clarke trajectory and $\textrm{ReLU}'(0)=0$]
\label{thm:app-dead}
Let $p\geq5$ be a prime. There is a nonempty open set
$U_{\mathrm{dead}}\subset\R^{3p}$ of initially active states such that:
\begin{enumerate}[label=\textup{(\roman*)}]
\item every Clarke trajectory from $U_{\mathrm{dead}}$ reaches a completely
inactive state in finite time and is constant thereafter;
\item the deterministic continuous-time dynamics obtained by assigning
$\textrm{ReLU}'(0)=0$ have the same behavior;
\item after shrinking $U_{\mathrm{dead}}$ if necessary, there exists
$\delta_p>0$ such that every terminal state satisfies
\[
 \max_{\zeta\neq0}F_\zeta(\theta_\infty)\leq1-\delta_p.
\]
\end{enumerate}
\end{theorem}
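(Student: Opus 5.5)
The plan is to reuse the construction of the main proof (the base point $\theta_*$ from \eqref{proof::initial condition} and its neighborhood $U_0$) and to add one structural argument: at the kink, \emph{every} Clarke selection, not just slope $0$, forces the trajectory to freeze. I would take $U_{\mathrm{dead}}\subset U_0$ and proceed in three steps: before the hitting time, at and after the hitting time, and the terminal Fourier bound.

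\emph{Before the hitting time.} On the open chamber $\{u(0)+v(0)>0,\ u(a)+v(b)<0 \text{ for }(a,b)\neq(0,0)\}$ the loss $\mathcal L$ is $C^\infty$. So $\partial_C\mathcal L=\{\nabla\mathcal L\}$ there, and every Clarke trajectory coincides with the unique ODE solution up to the hitting time. By the main proof, for $\theta_0$ near $\theta_*$ the following hold:
\begin{itemize}
\item the gate $s=u(0)+v(0)$ reaches $0$ transversely at a time $T(\theta_0)$;
\item all other gates keep a margin $\leq -(B-\varepsilon/2)/2<0$ on $[0,T]$;
\item $c:=\frac1p\sum_{c'}w(c')-w(0)$ stays close to $\frac{p-1}{p}g(T)>0$ at the endpoint.
\end{itemize}
I would shrink the neighborhood so that at every endpoint $c\geq c_0>0$ and all other gates are $\le -m<0$.

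\emph{At and after $T$ (the main step).} Near an endpoint, $\mathcal L$ is a smooth function of $h_{00}=\operatorname{ReLU}(s)$, the other $h_{ab}$ are locally zero, and $w$ enters only through products with $h_{00}$. I would apply the Clarke chain rule \cite[Chapter~2]{clarke1990} at a point with $s=0$. It gives that every element of $\partial_C\mathcal L$ has zero $w$-component (because $h_{00}=0$) and zero component in all $u(a),v(b)$ with nonzero index. Its $u(0)$- and $v(0)$-components equal $\sigma\,\partial_{h_{00}}\ell/p^2$ with $\sigma\in[0,1]$. At $h_{00}=0$ the softmax is uniform, so $\partial_{h_{00}}\ell=c$. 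Hence along any Clarke trajectory, on the set where $s=0$,
\[
\dot s=-\tfrac{2}{p^2}\sigma(t)\,c\le 0 .
\]
Moreover, $c$ and all nonzero-index entries are frozen there, because $w$ does not move. Where $s<0$, the whole gradient vanishes.

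Now I would argue by contradiction that $s$ never becomes negative. Suppose $s(t_1)<0$ and let $t_0<t_1$ be the last zero of $s$. On $(t_0,t_1)$ the velocity is $0$, so $s(t_1)=s(t_0)=0$, which is impossible. Hence $s\equiv0$ for $t\ge T$. Then $\dot s=0$ a.e., so $\sigma c=0$ a.e., and since $c\ge c_0>0$ we get $\sigma=0$ a.e. Thus all velocity components vanish a.e., and the trajectory is constant. This proves (i), including that the other gates stay at margin $-m$. The deterministic convention $\operatorname{ReLU}'(0)=0$ is the special selection $\sigma\equiv0$, which gives (ii) at once.

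\emph{Terminal purity bound.} Every terminal state is the hitting endpoint $\theta(T(\theta_0))$, which depends continuously on $\theta_0$ by the implicit-function argument of the main proof. At $\theta_*$ the endpoint satisfies $\max_\zeta F_\zeta=\frac{2}{p-1}$, with nonzero centered norm. So after shrinking, every terminal state has $\max_{\zeta\neq0}F_\zeta\le 1-\delta_p$ with $\delta_p:=\frac12\bigl(1-\frac{2}{p-1}\bigr)>0$ for $p\ge5$, which is (iii). Openness and initial activity of $U_{\mathrm{dead}}$ are inherited from $U_0$.

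I expect the kink step to be the main obstacle. It needs a careful justification that the Clarke subdifferential of the composite loss at $s=0$ really has the stated product form: the inclusion given by the chain rule suffices, since only an outer estimate is needed. It also needs the uniform positivity $c\ge c_0$ on a neighborhood of the endpoints.
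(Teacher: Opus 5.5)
Your proposal is correct and follows essentially the paper's route. The steps match: every Clarke trajectory coincides with the smooth ODE up to a transverse hit. At $s=0$ the Clarke inclusion has zero $w$-velocity and gives $\dot s=-\frac{2}{p^2}\sigma\gamma$ with $\sigma\in[0,1]$ (your $c$ is the paper's $\gamma=\mu-w(0)$). Pinning $s\equiv0$ then forces $\sigma=0$ almost everywhere, and continuity of the hitting endpoint gives the Fourier gap. The differences are cosmetic: the paper pins $s$ with $s^{\pm}$ rather than a last-zero argument, and it uses the convexity bound $D(s,w)\geq\gamma$ rather than the main text's conserved quantity.

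Two small points need tightening. First, your last-zero argument only rules out $s<0$ after $T$. You must also rule out $s>0$, which follows by the same argument since $\dot s=-\frac{2}{p^2}D(s,w)<0$ on $\{s>0\}$ near the endpoint. Second, the claim that the trajectory stays in the neighborhood where your velocity formulas hold should be made explicit with a first-exit time, as the paper does with $\tau_+$.
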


\begin{theorem}[$C^\infty$ dead-zone smoothing]
\label{thm:app-smoothing}
Fix $\epsilon>0$. Let $\psi\in C^\infty(\R;[0,1])$ be nondecreasing and
satisfy
\[
 \psi(t)=0\ (t\leq0),
 \qquad
 \psi(t)>0\ (t>0),
 \qquad
 \psi(t)=1\ (t\geq1).
\]
Set
\begin{equation}
 \sigma_\epsilon(z):=z\psi(z/\epsilon).
\label{app:sigma}
\end{equation}
Replace ReLU by $\sigma_\epsilon$ in the network. Then there is a nonempty
open set $U_{\epsilon,\mathrm{sm}}$ of initially active states such that the
unique smooth gradient-flow trajectory converges to a finite nonzero limit
$\theta_\infty$ and
\[
 \max_{\zeta\neq0}F_\zeta(\theta_\infty)<1.
\]
At a symmetric base point,
\[
 F_\zeta(\theta_\infty)=\frac2{p-1}
 \qquad(\zeta\neq0).
\]
\end{theorem}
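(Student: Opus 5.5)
The plan is to reuse the single-active-gate construction of Section~\ref{short counterexample}, with $\operatorname{ReLU}$ replaced by $\sigma_\epsilon$. The base point is chosen so that the dynamics reduce to a planar system in $(s,g)$, and we then argue by continuity.

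\textbf{Base point and invariant region.} Take
\[
u(0)=v(0)=s_0/2,\quad u(a)=v(a)=-B\ (a\neq0),\quad w(0)=-A,\quad w(c)=0\ (c\ne0),
\]
with $B>s_0/2$ and $A$ large. All gates other than $(0,0)$ are then strictly negative. On the open chamber $\{u(0)+v(0)>0,\ u(a)+v(b)<0 \text{ otherwise}\}$ we have $\sigma_\epsilon\equiv0$ and $\sigma_\epsilon'\equiv0$ on the inactive gates, and $\sigma_\epsilon$ is $C^\infty$. So only the $(0,0)$ example contributes to the gradient, and the flow is the unique smooth gradient flow. By the symmetries $u(0)\leftrightarrow v(0)$ and $w(j)\leftrightarrow w(k)$ for $j,k\neq0$, the base trajectory keeps $u(0)=v(0)=s/2$, keeps the wrong logits equal, and leaves the other coordinates fixed. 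With $h=\sigma_\epsilon(s)$ and $g=w(j)-w(0)$, the same chain-rule computation as in \eqref{proof::ode s}--\eqref{proof::ode g} gives
\[
\dot s=-\tfrac{2}{p^2}(1-\pi_0)\,g\,\sigma_\epsilon'(s),\qquad
\dot g=-\tfrac{1}{p(p-1)}(1-\pi_0)\,\sigma_\epsilon(s),
\]
where $\pi_0=(1+(p-1)e^{hg})^{-1}$. As long as $s>0$ and $g>0$, we have $\dot s\le0$ and $\dot g\le0$. The inactive gates never increase, so the trajectory stays in the chamber closure with $s\in(0,s_0]$.

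\textbf{Convergence to a finite limit.} Dividing the two equations gives $dg/ds=\frac{p\,\sigma_\epsilon(s)}{2(p-1)g\,\sigma_\epsilon'(s)}$. This yields the conserved quantity
\[
g^2-\tfrac{p}{p-1}\Phi(s),\qquad \Phi(s)=\int_0^s\frac{\sigma_\epsilon}{\sigma_\epsilon'}.
\]
There is a subtlety here: $\sigma_\epsilon'$ could vanish at positive $s$ because $\psi$ is only nondecreasing. I would avoid the division and argue directly. Since $s$ and $g$ are monotone and bounded, the limits $s_\infty\in[0,s_0]$ and $g_\infty$ exist. Moreover $|\dot g|\le C\sigma_\epsilon(s)$, and the Lyapunov identity $\int_0^\infty|\nabla\mathcal L|^2\,dt=\mathcal L(0)-\mathcal L_\infty<\infty$ holds. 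If $s_\infty>0$, then $\dot g\le -c\,\sigma_\epsilon(s_\infty)<0$ for all times, which forces $g\to-\infty$. So either $s\to0$, or $g$ drops to $0$ first. To exclude the second case I take $A$ large relative to $s_0$: the total decrease of $g$ is at most $\tfrac{1}{p(p-1)}\int_0^\infty\sigma_\epsilon(s)\,dt$. I would bound this integral using $\dot s\le -\tfrac{2}{p^2}\tfrac{p-1}{p}\,g_{\min}\,\sigma_\epsilon'(s)$, which follows from $hg\ge0$ giving $1-\pi_0\ge(p-1)/p$, together with $\sigma_\epsilon(s)\le s\,\sup\sigma_\epsilon'$-type comparisons on $(0,s_0]$. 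Choosing $s_0<\epsilon$ small and $A$ large makes the decrease of $g$ smaller than $A/2$. Then $g_\infty\ge A/2$, and necessarily $s_\infty=0$. The limit $\theta_\infty$ is finite with $u_\infty=v_\infty=(0,-B,\dots,-B)$ and $w_\infty$ having centered part $-g_\infty q$. Note that the flow never actually reaches $s=0$ in finite time, since $\sigma_\epsilon$ is smooth; this is why the statement concerns limits rather than hitting times. The Fourier computation from Section~\ref{short counterexample} then gives $F_\zeta(\theta_\infty)=2/(p-1)$ for every $\zeta\neq0$.

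\textbf{Open set; main obstacle.} Perturbing the initial point breaks the symmetry, but the chamber structure persists. Every inactive preactivation has a negative margin of order $B-s_0/2$. The active gate's coordinates $u(0),v(0)$ are nonincreasing as long as all $w(c)$ stay well separated in sign: the $u(0)$-derivative is $-\tfrac1{p^2}\sigma_\epsilon'(z_{00})(\pi\cdot w-w(0))\le0$ whenever $w(0)<\min_{c\ne0}w(c)$. The $w$ ordering is preserved by the same monotonicity, so the perturbed trajectory stays in the chamber for all time. This uniform-in-time control is the main obstacle, since plain continuous dependence only works on finite intervals. I would handle it with an invariant-set argument: the set
\[
\{z_{ab}<0\ ((a,b)\neq(0,0)),\ 0<z_{00}\le s_0',\ w(0)+A/2<\min_{c\neq0}w(c)\}
\]
is forward invariant for an open neighbourhood of the base point. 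The convergence argument above then goes through verbatim, giving $z_{00}\to0$ and convergence of $w$. Both limits depend continuously on the initial data, because the remaining motion after time $T$ is uniformly small: it is bounded by the tail integral of $\sigma_\epsilon(z_{00})$. Finally, $F_\zeta$ is continuous wherever the centered norm is nonzero, and at the base limit $\max_\zeta F_\zeta=2/(p-1)<1$. Shrinking the neighbourhood therefore gives $U_{\epsilon,\mathrm{sm}}$ with $\max_\zeta F_\zeta(\theta_\infty)<1$.
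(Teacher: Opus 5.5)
\textbf{Verdict: there is a genuine gap.} Your architecture is the paper's: same base point, central-only chamber, reduction to $(s,g)$, a lower bound on $g$ that forces $s\to0$, integrability of $\sigma_\epsilon(s(t))$, two-level symmetry giving $2/(p-1)$, and continuity via a transverse level crossing plus a small tail. What is missing is the one quantitative estimate on which all of this rests.

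Your ``subtlety'' is a misreading. Since $\psi>0$ on $(0,\infty)$ and $\psi'\ge0$, you have $\sigma_\epsilon'(s)\ge\psi(s/\epsilon)>0$ for $s>0$. So $s$ is a legitimate time variable and your conserved quantity is valid. At the base point, with $A$ large, it already yields $g\ge g_*>0$, and $w$ converges because $\mu$ is constant and $g$ is monotone.

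The replacement you propose fails. A comparison $\sigma_\epsilon(s)\le s\sup\sigma_\epsilon'$ only bounds $\int_0^\infty\sigma_\epsilon(s(t))\,dt$ by a multiple of $\int_0^\infty s(t)\,dt$, and that integral is always infinite here. Indeed, $\psi$ is flat at $0$, so $\sigma_\epsilon'(s)=O(s^2)$, hence $\dot s\ge-Cs^2$ and $s(t)\ge(1/s_0+Ct)^{-1}$. The missing ingredient is the pointwise inequality $\sigma_\epsilon(s)\le s\,\sigma_\epsilon'(s)$, which follows from $\psi'\ge0$. It gives $\frac{d}{dt}\frac{s^2}{2}=s\dot s\le-\frac{2(p-1)}{p^3}g_{\min}\sigma_\epsilon(s)$, hence $\int_0^\infty\sigma_\epsilon(s(t))\,dt\le\frac{p^3s_0^2}{4(p-1)g_{\min}}$. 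This is the paper's bound $J(s_0)\le s_0^2/2$, and it even avoids dividing by $\sigma_\epsilon'$, as you wanted.

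This estimate is exactly what your open-set step lacks, and that step also asserts something false. While $z_{00}>0$, you have $\dot w(0)=\frac{h}{p^2}(1-q_0)>0$ and $\dot w(c)=-\frac{h}{p^2}q_c<0$ for $c\ne0$. So every gap $w(c)-w(0)$ strictly shrinks, and the ordering is \emph{not} preserved by monotonicity. Your displayed set is not forward invariant: a point where $w(0)+A/2<\min_{c\ne0}w(c)$ is nearly tight leaves it at once.

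Showing that orbits from a small neighbourhood stay in the set needs a bootstrap with two ingredients:
\begin{enumerate}[label=\textup{(\roman*)}]
\item a bound on $\int_0^\infty\sigma_\epsilon(z_{00})\,dt$ that is uniform over the neighbourhood;
\item a lower bound on the driving term $D=\sum_cq_cw(c)-w(0)$ valid off the symmetric locus. The symmetric $(s,g)$ equations do not carry over ``verbatim'' there.
\end{enumerate}
The paper gets (ii) from convexity of the log-partition function: $D\ge g:=\mu-w(0)$, with no symmetry or ordering hypothesis. From this, $\partial_s(g^2)\le\sigma_\epsilon/\sigma_\epsilon'$, so $g^2\ge g(0)^2-J(s_0)$ for every nearby initial condition. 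Alternatively, $m=\min_{c\ne0}(w(c)-w(0))$ satisfies $D\ge\frac{p-1}{p}m$ and decreases at rate at most $\frac{2}{p^2}\sigma_\epsilon(z_{00})$.

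The same bound is also what makes the tail after the level-$\delta$ crossing uniformly small, namely $\int_{\{z_{00}\le\delta\}}\sigma_\epsilon(z_{00})\,dt\le C\delta^2/g_*$. Your continuity argument asserts this but does not prove it.
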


\begin{theorem}[Fixed-step full-batch gradient descent]
\label{thm:app-discrete}
Let $p\geq5$ be a prime and fix $\eta>0$. Consider vanilla full-batch gradient descent
\[
 \theta_{n+1}=\theta_n-\eta\nabla\mathcal L(\theta_n)
\]
with the convention $\textrm{ReLU}'(0)=0$. There is a nonempty open set
$U_{\eta,\mathrm{GD}}$ of initially active states such that every gate is
strictly inactive after the first update. Consequently
$\theta_n=\theta_1$ for all $n\geq1$, the normalized iterates converge, and
their limit is not single-frequency. At a symmetric base point,
\[
 F_\zeta(\theta_1)=\frac2{p-1}
 \qquad(\zeta\neq0).
\]
\end{theorem}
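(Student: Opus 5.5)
The plan is to reuse the symmetric one-hot initialization \eqref{proof::initial condition}, but to choose the parameters so that a single full-batch step already kills the $(0,0)$ gate. At $\theta_*$ only the $(0,0)$ gate is active, with $s=\varepsilon$ and $g=A$. The gradient therefore has the same structure as in \eqref{proof::ode s}--\eqref{proof::ode g}, evaluated at time zero. With the convention $\operatorname{ReLU}'(0)=0$ and all other gates strictly negative, these gates contribute nothing, so
\begin{align*}
 u_1(0)=v_1(0)=\frac{\varepsilon}{2}-\frac{\eta}{p^2}(1-\pi_0)A,\qquad u_1(a)=u(a),\ v_1(b)=v(b)\ (a,b\neq0),
\end{align*}
where $\pi_0=1/(1+(p-1)e^{\varepsilon A})\le 1/p$. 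We also have $w_1(0)=-A+\frac{\eta\varepsilon}{p^2}(1-\pi_0)$, and each $w_1(c)$, $c\neq0$, is decreased by $\frac{\eta\varepsilon}{p^2(p-1)}(1-\pi_0)$. Since $1-\pi_0\ge (p-1)/p$, the new gate satisfies $s_1\le \varepsilon-\frac{2\eta(p-1)}{p^3}A$. I would choose $A>\frac{p^3\varepsilon}{2\eta(p-1)}$, which makes $s_1<0$. I would also take $B>\varepsilon/2$, so that every other gate, being unchanged or decreased, stays strictly negative. After the first step every gate is then strictly inactive, so all $h_{ab}=0$ and all $w$-gradients vanish. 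The $u,v$-gradients vanish as well, either because the gates are strictly negative or by the chosen slope. Hence $\theta_n=\theta_1$ for all $n\ge1$, and the normalized iterates are trivially constant.

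For the Fourier computation at the base point, I note that $u_1,v_1$ are constant off $0$. Their centered parts are therefore multiples $(s_1/2+B)q$ of $q=e_0-\frac1p\mathbf 1$, and these multiples are nonzero provided $s_1/2+B\neq0$. I would secure this by taking $B$ large relative to $\eta A/p^2$, or simply by generic choice. By symmetry $w_1$ is also constant off $0$, so $w_{1,0}=-g_1q$ with $g_1=A-\frac{\eta\varepsilon}{p(p-1)}(1-\pi_0)$. Exactly as in the frozen-state subsection, every nonzero class then carries the fraction $2/(p-1)<1$ for $p\ge5$. The combined centered norm is nonzero since $B+s_1/2\ne0$.

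For openness I would argue as follows. On the open chamber where only the $(0,0)$ gate is positive, the loss is smooth, so $\theta\mapsto\theta_1$ is continuous there. At $\theta_*$ all $p^2$ gates of $\theta_1$ are strictly negative, so this persists on a neighbourhood. The $F_\zeta$ are continuous where the centered norm is nonzero, so $\max_\zeta F_\zeta(\theta_1)<1$ persists after shrinking. This gives $U_{\eta,\mathrm{GD}}$ open, nonempty, and initially active.

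The only delicate point is compatibility of the constraints. The first-step kill needs $A\gtrsim p^3\varepsilon/\eta$, while nondegeneracy and the negativity of the other gates need $B$ large. These constraints are independent, because $B$ does not enter the $(0,0)$ dynamics at all. So I would fix $\varepsilon$, then $A$, then $B$, in that order.
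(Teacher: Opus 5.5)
Your proof is correct and follows essentially the same route as the paper. You take a symmetric base point with only the $(0,0)$ gate active, make one full-batch step that drives $s_1<0$ while the cross gates only decrease, note that the iterates freeze, use the two-level form to get $F_\zeta=2/(p-1)$, and obtain the open set by continuity on the smooth chamber. Your condition $A>\frac{p^3\varepsilon}{2\eta(p-1)}$ is exactly the paper's $s_0<2\eta g_0/p^2$ with $g_0=\mu-w(0)=\frac{p-1}{p}A$; your bound $1-\pi_0\ge (p-1)/p$ is the special case of its convexity estimate $D\ge g_0$, and your check that $B+s_1/2\neq0$ supplies the nonvanishing centered norm that the paper leaves implicit.
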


\begin{remark}[Scope]
Theorem~\ref{thm:app-smoothing} concerns smooth activations with an exact dead
half-line. It does not cover softplus or arbitrary everywhere-positive
smoothings. Theorem~\ref{thm:app-discrete} concerns vanilla full-batch gradient
descent with a fixed step size and does not by itself cover momentum, Adam,
stochastic minibatches, or other stateful optimizers.
\end{remark}

\subsection{One-example derivatives and Clarke calculus}

For an input whose correct label is $y\in C_p$, let $h\geq0$ be its scalar
activation and define
\begin{equation}
 \ell_y(h,w)=\log\sum_{c\in C_p}e^{hw(c)}-hw(y).
\label{app:ell}
\end{equation}
Set
\begin{equation}
 q_c(h,w)=\frac{e^{hw(c)}}{\sum_{d\in C_p}e^{hw(d)}}
\label{app:q}
\end{equation}
and
\begin{equation}
 g_y(h,w)=\partial_h\ell_y(h,w)
 =\sum_cq_c(h,w)w(c)-w(y).
\label{app:g}
\end{equation}
When $z=u(a)+v(b)>0$, this example contributes
\begin{align}
 \dot u(a)&=-\frac1{p^2}g_y(z,w),
 &\label{app:udot}\\
 \dot v(b)&=-\frac1{p^2}g_y(z,w),
 &\label{app:vdot}\\
 \dot w(c)&=-\frac{z}{p^2}\bigl(q_c(z,w)-\mathbf1_{\{c=y\}}\bigr).
 &\label{app:wdot}
\end{align}
At $z<0$, the example contributes nothing. Let
\begin{equation}
 \mu(w):=\frac1p\sum_cw(c),
 \qquad
 \gamma_y(w):=\mu(w)-w(y)=g_y(0,w).
\label{app:gamma}
\end{equation}

\begin{lemma}[Exact positive-slope Clarke kink]
\label{lem:app-kink}
Suppose $\gamma_y(w_*)>0$. For
\[
 H_y(z,w):=\ell_y(\textrm{ReLU}(z),w),
\]
one has
\begin{equation}
 \clarke H_y(0,w_*)
 =\{(\alpha\gamma_y(w_*),0):0\leq\alpha\leq1\}.
\label{app:kink-subdiff}
\end{equation}
The function $H_y$ is Clarke regular at $(0,w_*)$. After composing $z$ with
the affine gate $u(a)+v(b)$, the example may therefore contribute
\[
 \alpha\gamma_y(w_*)\bigl(e_a^{(u)}+e_b^{(v)}\bigr),
 \qquad0\leq\alpha\leq1,
\]
to the $(u,v)$-gradient and contributes zero to the $w$-gradient. For a
finite sum of example losses, suppose additionally that every other example is
either strictly active, strictly inactive, or lies at a zero-coefficient kink.
Then the coefficients of the simultaneous positive-slope kink examples may be
selected independently.
\end{lemma}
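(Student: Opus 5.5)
We argue locally near $(0,w_*)$ by splitting the domain of $H_y$ into the two half-spaces $\{z>0\}$ and $\{z<0\}$, where $H_y$ is smooth.

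\textbf{Step 1: the two smooth pieces.} On $\{z\le 0\}$ we have $H_y(z,w)=\ell_y(0,w)=\log p$, a constant, so its gradient is $(0,0)$. On $\{z\ge0\}$ we have $H_y=\ell_y(z,w)$, which is smooth, and at $(0,w_*)$
\[
\partial_z\ell_y(0,w_*)=g_y(0,w_*)=\gamma_y(w_*),\qquad \partial_{w(c)}\ell_y(0,w_*)=0\cdot(q_c-\mathbf 1_{\{c=y\}})=0 .
\]
So $H_y=\max$ or $\min$ of two $C^1$ functions near the point. Indeed, since $\ell_y(z,w)-\log p=\gamma_y(w)z+O(z^2)$ and $\gamma_y(w)>0$ near $w_*$, we get $H_y=\max\{\log p,\ \ell_y(z,w)\}$ locally: for $z>0$ the expression $\ell_y$ exceeds $\log p$, and for $z<0$ it falls below.

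\textbf{Step 2: subdifferential and regularity.} For a pointwise maximum of finitely many $C^1$ functions, Clarke's max rule \cite[Chapter~2]{clarke1990} gives two facts. First, the function is Clarke regular. Second, its Clarke subdifferential is the convex hull of the gradients of the active pieces. Both pieces are active at $(0,w_*)$, so the subdifferential is $\operatorname{conv}\{(0,0),(\gamma_y(w_*),0)\}$, which is exactly \eqref{app:kink-subdiff}. As an alternative check, limiting gradients from the two open half-spaces give the same convex hull. The only point requiring care is verifying the max representation, which uses the hypothesis $\gamma_y>0$ through the sign of $\ell_y-\log p$ in $z$.

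\textbf{Step 3: composition with the gate.} Write the composed loss as $H_y\circ L$, where $L(u,v,w)=(u(a)+v(b),w)$ is linear and surjective. The chain rule for a Clarke regular function composed with a $C^1$ (here linear) map gives equality $\clarke(H_y\circ L)=L^{*}\clarke H_y$ \cite[Thm.~2.3.10]{clarke1990}. Since $L^*(\alpha\gamma,0)=\alpha\gamma(e_a^{(u)}+e_b^{(v)})$, this yields the stated $(u,v)$-contribution with zero $w$-part.

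\textbf{Step 4: finite sums and independent coefficients.} Here the difficulty is that the sum rule for Clarke subdifferentials is in general only an inclusion. Equality holds when all summands are regular \cite[Prop.~2.3.3, Cor.~3]{clarke1990}, and we check regularity of each type of other example:
\begin{itemize}
\item Strictly active and strictly inactive examples are $C^1$ near the point, hence regular.
\item A zero-coefficient kink (where $\gamma=0$) needs an argument. Its subdifferential is the singleton $\{0\}$ by the same convex-hull computation. In fact the function is strictly differentiable there: near the kink $H=\max$ or $\min$ of two $C^1$ functions whose gradients coincide at the point, and this gives strict differentiability. So it is regular.
\end{itemize}
The composed loss is therefore a sum of regular functions, and $\clarke$ of the sum equals the Minkowski sum of the individual subdifferentials. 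In that Minkowski sum each positive-slope kink contributes its own segment with its own $\alpha_i\in[0,1]$, so the coefficients may be chosen independently. I expect this regularity bookkeeping to be the main obstacle, in particular making the zero-coefficient case precise.
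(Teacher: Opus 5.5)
Your argument is correct, but its core step takes a different route from the paper's. The paper computes $\clarke H_y(0,w_*)$ directly from the gradient-limit formula: the limiting gradients from the two half-spaces are $(0,0)$ and $(\gamma_y(w_*),0)$. It then proves regularity by hand. The one-sided directional derivative is $\gamma_y(w_*)\max\{\dot z,0\}$, which is the support function of the segment and therefore equals the Clarke directional derivative.

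You instead use $\gamma_y>0$ to write $H_y=\max\{\log p,\ell_y\}$ locally, and let Clarke's max rule give the subdifferential and regularity together. So you hand the regularity check to a cited rule rather than computing a directional derivative. In exchange, the max-of-smooth-functions representation says more about the structure. It also gives the local semiconvexity of Lemma~\ref{lem:app-semiconvex} immediately, since a finite max of $C^2$ functions is locally semiconvex; the paper proves that separately by restricting to lines. The paper's direct computation, for its part, is self-contained and handles positive and zero coefficients by one uniform calculation. The chain-rule and sum-rule steps are the same in both.

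One sub-claim in Step 4 is false and should be replaced. At a zero-coefficient kink, $\gamma_y(w)=\mu(w)-w(y)$ is a nonconstant linear function vanishing at $w_*$. Since $\ell_y(\cdot,w)$ is convex, $\ell_y(z,w)\geq\log p+\gamma_y(w)z$. Hence, arbitrarily close to $(0,w_*)$, there are points with $z<0$ and $\gamma_y(w)<0$ where $\ell_y>\log p=H_y$. There are also points with $z>0$ and $\gamma_y(w)>0$ where $H_y=\ell_y>\log p$. So $H_y$ is neither $\max\{\log p,\ell_y\}$ nor $\min\{\log p,\ell_y\}$ on any neighborhood.

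The conclusion you wanted still holds. The limiting-gradient computation you already mention gives $\clarke H_y(0,w_*)=\{0\}$. A locally Lipschitz function whose Clarke subdifferential is a singleton is strictly differentiable there (Clarke's Proposition~2.2.4), hence regular. Alternatively, you can check, as the paper does, that the directional derivative vanishes identically.
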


\begin{proof}
At differentiability points with $z<0$, $H_y(z,w)=\log p$, so the gradient is
zero. At differentiability points with $z>0$,
\[
 \partial_zH_y(z,w)=g_y(z,w)\longrightarrow\gamma_y(w_*)
\]
as $(z,w)\to(0,w_*)$, while
\[
 \nabla_wH_y(z,w)=z\bigl(q(z,w)-e_y\bigr)\longrightarrow0.
\]
Thus the limiting gradients are $(0,0)$ and $(\gamma_y(w_*),0)$, and their
closed convex hull is \eqref{app:kink-subdiff}. The directional derivative is
\[
 H_y'(0,w_*;\dot z,\dot w)
 =\gamma_y(w_*)\max\{\dot z,0\},
\]
which is the support function of that segment
\cite[Proposition~2.1.2]{clarke1990}. Hence the ordinary directional derivative
agrees with the Clarke generalized directional derivative, and $H_y$ is
Clarke regular in the sense of \cite[Definition~2.3.4]{clarke1990}.

At a zero-coefficient kink, the same calculation gives the singleton Clarke
subdifferential $\{0\}$ and zero directional derivative, so that summand is
also Clarke regular. Under the hypothesis in the final sentence of the lemma,
every other summand is therefore smooth or Clarke regular. The exact
finite-sum rule for regular functions and the affine chain rule
\cite[Proposition~2.3.3 and Theorem~2.3.9]{clarke1990} give equality with the
corresponding Minkowski sum. This permits the positive-slope kink coefficients
to be selected independently.
\end{proof}

\begin{lemma}[Local semiconvexity at a positive-slope kink]
\label{lem:app-semiconvex}
If $\gamma_y(w_*)>0$, then $H_y(z,w)=\ell_y(\textrm{ReLU}(z),w)$ is semiconvex in a
neighborhood of $(0,w_*)$.
\end{lemma}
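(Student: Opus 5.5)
Semiconvexity here means there is $C>0$ such that $H_y+\frac C2\|(z,w)\|^2$ is convex on a neighborhood $N$ of $(0,w_*)$. The plan is to exhibit $H_y$ locally as the maximum of two smooth functions and then use the standard fact that a maximum of finitely many $C^2$ functions with uniformly bounded Hessians is semiconvex.

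\textbf{Step 1: the max representation.} Set $\phi_-(z,w):=\ell_y(0,w)=\log p$ and $\phi_+(z,w):=\ell_y(z,w)$, where $\ell_y$ is the smooth function \eqref{app:ell}, now allowed to take arbitrary real $z$ in place of $h$. Both are $C^\infty$ on $\R\times\R^{C_p}$. For $z\le0$ we have $H_y=\phi_-$, and for $z\ge 0$ we have $H_y=\phi_+$. I will show that on a small neighborhood $N$ of $(0,w_*)$,
\[
 H_y=\max\{\phi_-,\phi_+\}.
\]
The key observation is that $\phi_+(0,w)=\phi_-(0,w)$ identically in $w$, and
\[
\partial_z\phi_+(z,w)=g_y(z,w)\longrightarrow \gamma_y(w_*)>0
\]
as $(z,w)\to(0,w_*)$, by \eqref{app:g} and \eqref{app:gamma}. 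By continuity, choose $N=\{|z|<r,\ \|w-w_*\|<r\}$ so that $\partial_z\phi_+>\gamma_y(w_*)/2$ on $N$. For each fixed $w$, the function $z\mapsto\phi_+(z,w)-\phi_-(z,w)$ then vanishes at $z=0$ and is strictly increasing on $(-r,r)$. Hence $\phi_+\ge\phi_-$ exactly when $z\ge0$, which gives the max representation on $N$. This step is where the hypothesis $\gamma_y(w_*)>0$ enters. If the sign were reversed, $H_y$ would be a local minimum of the two branches and would have a concave kink, so it would not be semiconvex.

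\textbf{Step 2: from the max to semiconvexity.} Shrink $N$ to a convex compact neighborhood on which the Hessians of $\phi_\pm$ are bounded in operator norm by some $C$; this is possible since both are $C^2$. Then each $\phi_\pm+\frac C2\|(z,w)\|^2$ has positive semidefinite Hessian on the convex set $N$, so it is convex there. A pointwise maximum of convex functions is convex, so
\[
H_y+\tfrac C2\|\cdot\|^2=\max\{\phi_\pm+\tfrac C2\|\cdot\|^2\}
\]
is convex on $N$, which is semiconvexity.

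\textbf{Main obstacle.} The only delicate point is Step 1, namely that the two branches cross transversally along the whole face $\{z=0\}$ near $w_*$, and not only at the single point $(0,w_*)$. This is exactly what the uniform lower bound on $\partial_z\phi_+$ supplies. I would check carefully that $g_y$ is jointly continuous, which is immediate from \eqref{app:q}. As a consistency check, the resulting local structure is compatible with the Clarke regularity and the segment subdifferential of Lemma~\ref{lem:app-kink}.
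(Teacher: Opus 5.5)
Your proof is correct, but it reaches convexity by a different mechanism than the paper.

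The paper never writes $H_y$ as a maximum. It keeps the two branches $H_y^\pm$ on their own half-spaces and bounds their Hessians below by $-CI$. It then restricts $H_y$ to an arbitrary affine line. Away from the crossing with $\{z=0\}$, the second derivative is at least $-C$ times the squared speed. At the crossing, the first derivative jumps upward: the tangential $w$-gradients agree (both are zero), and the normal derivative jumps from $0$ to $\gamma_y(w)>0$. So every line restriction becomes convex after adding a quadratic.

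You instead extend $\ell_y$ smoothly to $z<0$. Using $\phi_+(0,w)=\log p$ and the uniform bound $\partial_z\phi_+=g_y>\gamma_y(w_*)/2$, you get $H_y=\max\{\phi_-,\phi_+\}$ on a box neighborhood. You then finish with the fact that a pointwise maximum of convex functions is convex.

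The sign condition $\gamma_y>0$ plays the same role in both arguments. In yours it decides which branch lies on top; in the paper's it makes the derivative jump nonnegative.

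What your route buys:
\begin{itemize}
\item It replaces the one-dimensional jump analysis, which the paper leaves somewhat implicit, by a single standard fact. The implicit step is that a piecewise-$C^2$ function on an interval with second derivative $\geq -C$ and an upward derivative jump becomes convex after adding $Ct^2/2$.
\item It exhibits $H_y$ locally as a max of smooth functions. Via Clarke's max rule, this also explains the Clarke regularity and the segment subdifferential in Lemma~\ref{lem:app-kink}.
\end{itemize}

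What the paper's route buys: it only uses the branches on their closed half-spaces together with the jump condition on the interface. So it does not need $\ell_y$ to be defined and smooth for negative $z$. Here that extension comes for free, so both arguments are fully rigorous, and yours is the more economical.
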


\begin{proof}
The function has two smooth branches,
\[
 H_y^-(z,w)=\log p\quad(z<0),
 \qquad
 H_y^+(z,w)=\ell_y(z,w)\quad(z>0).
\]
They agree continuously on $z=0$. Their tangential $w$-gradients agree there
and are both zero, while
\[
 \partial_zH_y^-(0,w)=0,
 \qquad
 \partial_zH_y^+(0,w)=\gamma_y(w).
\]
After shrinking the neighborhood, $\gamma_y(w)\geq m>0$, and the Hessians of
both smooth branches are bounded below by $-CI$.

Restrict $H_y$ to an arbitrary affine line. Away from a crossing of $z=0$,
its second derivative is bounded below by $-C$ times the squared line speed.
At a crossing, its first derivative has a nonnegative jump because the normal
derivative jumps from $0$ to $\gamma_y(w)>0$. Adding a sufficiently large
quadratic makes every such line restriction convex. Hence $H_y$ is locally
semiconvex.
\end{proof}

\begin{lemma}[Du--Hu--Lee balance invariant along Clarke trajectories]
\label{lem:app-balance}
Along every Clarke trajectory,
\begin{equation}
 \Norm{u(t)}^2+\Norm{v(t)}^2-\Norm{w(t)}^2
 =
 \Norm{u(0)}^2+\Norm{v(0)}^2-\Norm{w(0)}^2.
\label{app:balance}
\end{equation}
\end{lemma}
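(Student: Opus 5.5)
The plan is to prove the invariant by computing $\frac{d}{dt}\bigl(\Norm{u}^2+\Norm{v}^2-\Norm{w}^2\bigr)=2\ip{u}{\dot u}+2\ip{v}{\dot v}-2\ip{w}{\dot w}$ at almost every time and showing it vanishes for every admissible Clarke selection. A Clarke trajectory is absolutely continuous, so it suffices to check this at a.e.\ $t$. At such a $t$, $-\dot\theta(t)$ is an element of $\clarke\mathcal L(\theta(t))$, which is contained in the Minkowski sum of the per-example subdifferentials by the sum and affine chain rules \cite[Proposition~2.3.3 and Theorem~2.3.9]{clarke1990}. This inclusion direction is the only one we need, so no regularity hypothesis is required. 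Hence
\[
\dot\theta=-\frac1{p^2}\sum_{a,b}\xi_{a,b}
\]
with each $\xi_{a,b}\in\clarke\bigl[\ell_{a+b}(\operatorname{ReLU}(u(a)+v(b)),w)\bigr]$. It is then enough to show the \emph{per-example identity}: for every $\xi=(\xi^u,\xi^v,\xi^w)$ in that subdifferential,
\[
\ip{u}{\xi^u}+\ip{v}{\xi^v}-\ip{w}{\xi^w}=0 .
\]

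Fix an example $(a,b)$ with label $y=a+b$ and gate $z=u(a)+v(b)$. Any element of its Clarke subdifferential has the form $\xi^u=\kappa e_a$, $\xi^v=\kappa e_b$ and $\xi^w=\eta$, where $(\kappa,\eta)\in\clarke H_y(z,w)$ and $H_y(z,w)=\ell_y(\operatorname{ReLU}(z),w)$. This follows again from the chain rule for the affine map $(u,v,w)\mapsto(u(a)+v(b),w)$, using the inclusion direction. The expression to kill is therefore $z\kappa-\ip{w}{\eta}$. I will show it vanishes on all of $\clarke H_y(z,w)$ by splitting into three cases.

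\emph{Case $z>0$.} Here $H_y$ is smooth, with $\kappa=g_y(z,w)$ and $\eta=z(q-e_y)$ by \eqref{app:g} and \eqref{app:wdot}. Then $\ip{w}{\eta}=z\bigl(\sum_c q_cw(c)-w(y)\bigr)=zg_y$, so the expression vanishes. This is Euler's identity for the $0$-homogeneous dependence on $(z,w)$ through the product $zw$.

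\emph{Case $z<0$.} The gradient is zero, so the expression is trivially $0$.

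\emph{Case $z=0$.} The limiting gradients near $(0,w)$ are $(0,0)$ and $(\gamma_y(w),0)$, as in the proof of Lemma~\ref{lem:app-kink}, and this argument works for any sign of $\gamma_y$. Hence the Clarke subdifferential is the convex hull $\{(\alpha\gamma_y,0):0\le\alpha\le1\}$, so $\eta=0$, and $z\kappa=0$ because $z=0$. The expression vanishes again.

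Summing the per-example identities over $(a,b)$ gives $\frac{d}{dt}(\cdots)=0$ a.e., and absolute continuity then yields \eqref{app:balance}. The only delicate point is justifying that an arbitrary element of $\clarke\mathcal L$ decomposes into per-example elements with the structure above. I would handle this by citing the inclusion versions of the Clarke sum and chain rules, which hold for locally Lipschitz functions without any regularity assumption. An alternative is to note that the bilinear expression is linear in $\xi$ and vanishes on every limiting gradient of $\mathcal L$ itself: at differentiability points, Euler's identity applied to the whole loss gives $0$. It therefore vanishes on the closed convex hull, which is $\clarke\mathcal L$. This second route avoids the calculus rules entirely, and I would present it as the main argument.
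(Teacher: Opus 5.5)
Your proposal is correct, and the route you designate as the main argument is essentially the paper's: the identity $\ip{u}{\xi_u}+\ip{v}{\xi_v}=\ip{w}{\xi_w}$ is verified for gradients at differentiability points and then carried to all of $\clarke\mathcal L$ through the gradient-limit representation, using that the expression is jointly continuous in the point and linear in the gradient. The only difference is that the paper checks the identity example by example at points off the kink hyperplanes, whereas you get it at every differentiability point from the invariance $\mathcal L(\lambda u,\lambda v,\lambda^{-1}w)=\mathcal L(u,v,w)$ for $\lambda>0$, which follows from positive homogeneity of ReLU and is what your ``Euler's identity'' should be stated as; your first route (inclusion forms of the sum and affine chain rules plus the explicit per-example kink subdifferential) is also valid but unnecessary.
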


\begin{proof}
For ordinary gradient flow, this is the one-hidden-unit specialization of
the nodewise balance invariant of Du, Hu, and Lee
\cite[Theorem~2.1]{du2018algorithmic}: the incoming weight block is $(u,v)$
and the outgoing weight block is $w$. The additional point needed here is
that the same identity holds for every Clarke selection.

In the defining Clarke gradient-limit representation, the approximating
points may be chosen outside the finite union of kink hyperplanes
\[
 \{u(a)+v(b)=0\},\qquad (a,b)\in C_p^2,
\]
which is a null set. Thus, at every such differentiability point, a single
example with preactivation $z=u(a)+v(b)$ satisfies
\[
 \ip{u}{\nabla_u\mathcal L_{a,b}}
 +\ip{v}{\nabla_v\mathcal L_{a,b}}
 =
 \ip{w}{\nabla_w\mathcal L_{a,b}}.
\]
If $z>0$, both sides equal $z\,g_y(z,w)$, up to the common factor $p^{-2}$;
if $z<0$, both sides vanish. Summing over the examples gives the identity
for the ordinary gradient at the approximating points.

The identity is linear in the gradient and therefore passes to limits and
convex combinations. Consequently, for every
\[
 \xi=(\xi_u,\xi_v,\xi_w)\in\clarke\mathcal L(u,v,w),
\]
one has
\[
 \ip{u}{\xi_u}+\ip{v}{\xi_v}=\ip{w}{\xi_w}.
\]
Along a Clarke trajectory, choose
$\xi(t)\in\clarke\mathcal L(\theta(t))$ such that
$\dot\theta(t)=-\xi(t)$. Then, for almost every $t$,
\[
 \frac{\partial}{\partial t}
 \left(\Norm{u}^2+\Norm{v}^2-\Norm{w}^2\right)
 =
 -2\left(
 \ip{u}{\xi_u}+\ip{v}{\xi_v}-\ip{w}{\xi_w}
 \right)
 =0.
\]
This proves \eqref{app:balance}.
\end{proof}

\subsection{The persistent sliding trajectory}

Distinguish $0\in C_p$ and write
\begin{equation}
 x:=u(0)+v(0).
\label{app:x}
\end{equation}
For $j\neq0$, define the cross-gate preactivations
\begin{equation}
 r_j:=u(0)+v(j),
 \qquad
 s_j:=u(j)+v(0),
\label{app:cross}
\end{equation}
and set
\begin{equation}
 \gamma_j:=\mu-w(j).
\label{app:gammaj}
\end{equation}
Fix $\gamma_*>0$ and choose
\begin{equation}
 X>\frac{p-1}{e\gamma_*}.
\label{app:X}
\end{equation}
Let $U_{\mathrm{slide}}$ consist of initial conditions satisfying
\begin{align}
 x&>X,
 &\label{app:U1}\\
 u(a)+v(b)&<0\quad\text{for every }(a,b)\neq(0,0),
 &\label{app:U2}\\
 \gamma_j&>\gamma_*\quad(j\neq0).
 &\label{app:U3}
\end{align}
This is a nonempty open set. For example, take
\[
 u(0)=v(0)=X,
 \qquad
 u(j)=v(j)=-3X\quad(j\neq0),
\]
and
\[
 w(0)=2p\gamma_*,
 \qquad
 w(j)=0\quad(j\neq0),
\]
then perturb slightly. Condition \eqref{app:U3} implies
\[
 w(0)>\mu>w(j)\quad(j\neq0),
\]
because
\[
 w(0)-\mu=\sum_{j\neq0}(\mu-w(j)).
\]
Set
\begin{equation}
 d_j:=w(0)-w(j)>0.
\label{app:dj}
\end{equation}

While only $(0,0)$ is strictly active, define
\begin{equation}
 A=A(x,w):=w(0)-\sum_cq_c(x,w)w(c)
 =\sum_{j\neq0}q_j(x,w)d_j>0.
\label{app:A}
\end{equation}
The central example gives
\begin{align}
 \dot w(0)&=\frac{x}{p^2}(1-q_0),
 &\label{app:w0dot}\\
 \dot w(j)&=-\frac{x}{p^2}q_j\quad(j\neq0).
 &\label{app:wjdot}
\end{align}
Consequently,
\begin{align}
 \dot\mu&=0,
 &\label{app:mudot}\\
 \dot\gamma_j&=\frac{x}{p^2}q_j>0,
 &\label{app:gammadot}\\
 \dot d_j&=\frac{x}{p^2}(1-q_0+q_j)>0.
 &\label{app:ddot}
\end{align}
Moreover,
\begin{equation}
 A=\frac{\sum_{j\neq0}d_je^{-xd_j}}{1+\sum_{j\neq0}e^{-xd_j}}
 \leq\sum_{j\neq0}d_je^{-xd_j}
 \leq\frac{p-1}{ex},
\label{app:A-bound}
\end{equation}
since $ze^{-z}\leq e^{-1}$ for $z\geq0$.

\subsubsection{Sequential capture}

At any time, let
\[
 R:=\{j\neq0:r_j=0\},
 \qquad
 C:=\{j\neq0:s_j=0\},
\]
and put $k=|R|$, $\ell=|C|$. For $j\in R$, choose the row-kink coefficient
\begin{equation}
 \alpha_j:=\frac{A}{(k+1)\gamma_j},
\label{app:alpha}
\end{equation}
and for $j\in C$, choose
\begin{equation}
 \beta_j:=\frac{A}{(\ell+1)\gamma_j}.
\label{app:beta}
\end{equation}

The construction is understood inductively over the strata determined by the
captured sets $R$ and $C$. At the entry time of a stratum, the central gate is
strictly positive, the captured cross gates are zero, every uncaptured cross
gate and every off-cross gate is strictly negative, and the strict bounds
$x>X$ and $\gamma_j>\gamma_*$ hold. On that stratum, define the selected vector
field using \eqref{app:alpha}--\eqref{app:beta} and solve the resulting smooth
ODE up to the first possible exit. The calculations in
Lemmas~\ref{lem:app-sliding} and~\ref{lem:app-no-offcross}, made conditionally
up to that exit time, show that $x$ and every $\gamma_j$ increase, the kink
coefficients remain admissible, captured gates stay at zero, and no off-cross
gate can reach zero. Hence the only possible exit is an impact at which one or
more uncaptured cross gates reach zero. Add every such gate to $R$ or $C$ and
restart the smooth ODE on the new stratum. Continuity preserves all remaining
strict inequalities across the impact. Since each impact captures at least one
of the $2(p-1)$ cross gates, there can be at most $2(p-1)$ impacts; Lemma~\ref{lem:app-finite-capture} below shows that no uncaptured cross gate can
persist forever. This induction removes any circular dependence among the sign
pattern, the sliding equations, and the invariants used below.

\begin{lemma}[Admissibility and sliding]
\label{lem:app-sliding}
The coefficients \eqref{app:alpha}--\eqref{app:beta} lie in $[0,1]$ and give
\begin{align}
 \dot u(0)&=\frac{A}{(k+1)p^2},
 &\dot v(j)&=-\frac{A}{(k+1)p^2}\quad(j\in R),
 &\label{app:row-slide}\\
 \dot v(0)&=\frac{A}{(\ell+1)p^2},
 &\dot u(j)&=-\frac{A}{(\ell+1)p^2}\quad(j\in C).
 &\label{app:col-slide}
\end{align}
Hence $\dot r_j=0$ for $j\in R$ and $\dot s_j=0$ for $j\in C$.
\end{lemma}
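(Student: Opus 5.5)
The plan is to verify the two claims directly. First, the bound $0\le\alpha_j,\beta_j\le1$ will follow from the a priori estimates on the stratum. Second, the sliding equations will come from adding up, coordinate by coordinate, the contributions permitted by Lemma~\ref{lem:app-kink}. Throughout, I work on a fixed stratum up to its first exit time, where (by the inductive setup) $x>X$, $\gamma_j>\gamma_*$ for all $j\neq0$, the captured cross gates are exactly zero, and every other gate except $(0,0)$ is strictly negative.

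\emph{Admissibility.} Since $A>0$ by \eqref{app:A} and $\gamma_j>\gamma_*>0$, both coefficients are positive. For the upper bound, use \eqref{app:A-bound} together with $x>X$ and the choice \eqref{app:X}:
\[
\alpha_j=\frac{A}{(k+1)\gamma_j}\le\frac{A}{\gamma_*}\le\frac{p-1}{e\,x\,\gamma_*}<\frac{p-1}{e\,X\,\gamma_*}<1 .
\]
The same estimate applies to $\beta_j$.

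\emph{Legitimacy of the selection.} The row kink example $(0,j)$ and the column kink example $(j,0)$ both carry the label $y=j$. Their kink slope is therefore $\gamma_y(w)=\mu-w(j)=\gamma_j>0$, so each lies at a positive-slope kink. Every remaining example is either strictly active (only $(0,0)$) or strictly inactive. Lemma~\ref{lem:app-kink} then applies: the coefficients $\alpha_j$ ($j\in R$) and $\beta_j$ ($j\in C$) may be chosen independently in $[0,1]$. A kink example contributes $\alpha\gamma_j(e^{(u)}_a+e^{(v)}_b)/p^2$ to the gradient (with the $1/p^2$ normalization of \eqref{intro::loss function}) and nothing to the $w$-gradient. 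The $w$-equations \eqref{app:w0dot}--\eqref{app:wjdot} are therefore unaffected.

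\emph{Coordinatewise bookkeeping.} The coordinate $u(0)$ enters only the examples $(0,b)$.
\begin{itemize}
\item The central example, by \eqref{app:udot}, contributes $-g_0(x,w)/p^2=A/p^2$, since $g_0=\sum_cq_cw(c)-w(0)=-A$.
\item Each $j\in R$ contributes $-\alpha_j\gamma_j/p^2=-A/((k+1)p^2)$.
\item The inactive examples contribute nothing.
\end{itemize}
Summing gives
\[
\dot u(0)=\frac{A}{p^2}\Bigl(1-\frac{k}{k+1}\Bigr)=\frac{A}{(k+1)p^2}.
\]
For $j\in R$, the coordinate $v(j)$ enters the examples $(a,j)$. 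All of these with $a\neq0$ are strictly negative off-cross gates, so only the kink contributes, giving $\dot v(j)=-A/((k+1)p^2)$. The column equations \eqref{app:col-slide} follow by the symmetric computation with $v(0)$, $u(j)$, $\ell$ and $\beta_j$. Adding the two relevant equations yields $\dot r_j=\dot u(0)+\dot v(j)=0$ for $j\in R$, and likewise $\dot s_j=0$ for $j\in C$.

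The only delicate point is bookkeeping rather than estimation. One must confirm that each coordinate is touched by no examples beyond those listed; in particular, that off-cross gates remain strictly negative up to the exit time. This is exactly the inductive hypothesis of the stratum construction, justified conditionally via Lemma~\ref{lem:app-no-offcross}. One must also confirm that the kink slope for $(0,j)$ and $(j,0)$ is $\gamma_j$ for label $j=0+j$, which holds as shown above.
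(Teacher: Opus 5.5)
Your proposal is correct and follows essentially the same route as the paper: bound $\alpha_j,\beta_j$ using \eqref{app:A-bound}, $x>X$ and $\gamma_j>\gamma_*$, invoke Lemma~\ref{lem:app-kink} to justify choosing the kink coefficients independently, and sum the central contribution $A/p^2$ with the $k$ captured-row contributions $-A/((k+1)p^2)$. Your version is only more explicit than the paper's. It spells out that the kink examples carry label $j$, that the slope is $\gamma_j$, that $g_0=-A$, and that the hypotheses of the independent-selection clause hold.
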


\begin{proof}
By \eqref{app:A-bound}, $x\geq X$, and \eqref{app:gammadot},
\[
 \alpha_j\leq\frac{p-1}{(k+1)eX\gamma_*}<1,
\]
and similarly for $\beta_j$. Lemma~\ref{lem:app-kink} permits these
coefficients. The central example contributes $A/p^2$ to $\dot u(0)$, while
each of the $k$ captured row gates contributes
$-\alpha_j\gamma_j/p^2=-A/((k+1)p^2)$. Thus
\[
 \dot u(0)=\frac1{p^2}\left(A-k\frac{A}{k+1}\right)
 =\frac{A}{(k+1)p^2}.
\]
A captured $v(j)$ receives only its own row-gate contribution. The column
calculation is identical.
\end{proof}

In particular,
\begin{equation}
 \dot x=\frac{A}{p^2}\left(\frac1{k+1}+\frac1{\ell+1}\right)>0.
\label{app:xdot}
\end{equation}
An uncaptured row gate has $v(j)$ constant and hence
$\dot r_j=\dot u(0)>0$; uncaptured columns behave similarly.

\begin{lemma}[No off-cross activation]
\label{lem:app-no-offcross}
Throughout the construction, $(0,0)$ is the only strictly active input.
Captured cross inputs have activation zero, and every other input has strictly
negative preactivation.
\end{lemma}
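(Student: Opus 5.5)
We prove Lemma~\ref{lem:app-no-offcross} inside the stratum-by-stratum induction described above. On each stratum we work with the smooth selected vector field, up to the first exit time. Three facts are needed there: that $x=u(0)+v(0)$ stays strictly positive, that captured cross gates stay at zero, and that every off-cross gate $u(a)+v(b)$ with $a,b\neq0$ stays strictly negative. Uncaptured cross gates are negative by the definition of the stratum until they impact. At an impact they join $R$ or $C$ with activation exactly zero, and are then handled by Lemma~\ref{lem:app-sliding}.

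\emph{The central gate and the captured cross gates.} For the central gate we use \eqref{app:xdot}: since $A>0$, $x$ is nondecreasing. It therefore stays above its entry value, which exceeds $X>0$. For the captured cross gates, Lemma~\ref{lem:app-sliding} gives $\dot r_j=0$ for $j\in R$ and $\dot s_j=0$ for $j\in C$. Each of these gates entered the stratum at zero, so it remains at zero.

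\emph{The off-cross gates.} The main point is monotonicity of the nonzero-index coordinates. For $a\neq0$, the coordinate $u(a)$ receives contributions only from inputs $(a,b)$ that are strictly active or lie on a positive-slope kink. No off-cross input is active, since none is strictly positive. Hence the only possible contribution to $u(a)$ comes from the column kink $(a,0)$, and only when $a\in C$. By \eqref{app:col-slide} that contribution has rate $-A/((\ell+1)p^2)\le0$. The same argument applied to rows shows that each $v(b)$ with $b\neq0$ is nonincreasing.

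\emph{Compatibility with Lemma~\ref{lem:app-kink}.} Any off-cross gate that is exactly zero would force us to treat it as a kink. We rule this out before it can occur: each off-cross gate starts strictly negative and is nonincreasing on the stratum, so it stays at or below its entry value, which is strictly negative. This also verifies the hypothesis in the last sentence of Lemma~\ref{lem:app-kink}. Every non-kink example is then strictly active (only $(0,0)$) or strictly inactive, so the independent selection of the kink coefficients is legitimate. There is no circularity here: we argue up to the first time any of these strict inequalities could fail. On that interval the monotonicity shows the inequality cannot fail, so by continuity the exit time is only a cross-gate impact.

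\emph{Crossing impacts.} At an impact, all the strict inequalities persist by continuity, and the argument restarts on the new stratum. The main obstacle I expect is bookkeeping at simultaneous impacts. The key observation there is that capture only ever makes the nonzero-index coordinates decrease, never increase. This is exactly what keeps the off-cross gates $u(a)+v(b)$ negative for all time.
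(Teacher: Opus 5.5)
Your proof is correct, but it takes a different route from the paper's.

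\textbf{The paper's route.} The paper argues statically, by a case split on which indices of an off-cross pair $(i,j)$ are captured.
\begin{itemize}
\item If neither index is captured, $u(i)$ and $v(j)$ have never moved, so the gate keeps its negative initial value.
\item If $j\in R$ and $i\notin C$, the face relation $v(j)=-u(0)$ together with the uncaptured sign $u(i)+v(0)<0$ gives $u(i)+v(j)=u(i)-u(0)<-x$. The other mixed case is symmetric.
\item If both indices are captured, the gate equals exactly $-x$.
\end{itemize}

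\textbf{Your route.} You observe that under the selected vector field every nonzero-index coordinate $u(a)$, $v(b)$ is nonincreasing in time. The only terms it receives are captured-kink contributions $-\alpha_j\gamma_j/p^2\le0$. Hence every off-cross gate is nonincreasing and stays below its negative entry value, and you close the bootstrap with a first-exit-time argument.

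\textbf{Comparison.} Your argument needs no case analysis. It uses only the sign of the kink contributions, so the off-cross conclusion would hold for any admissible coefficients in $[0,1]$, not just \eqref{app:alpha}--\eqref{app:beta}. The paper's argument yields a stronger, quantitative margin: $u(i)+v(j)\le -x$ whenever at least one index is captured. This margin grows because $x$ increases, and it matches the full-face picture \eqref{app:full-face}, where every off-cross preactivation equals $-x$. Your bound only controls off-cross gates by their initial values.
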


\begin{proof}
If neither coordinate is captured, the corresponding entries of $u$ and $v$
remain constant. If $j\in R$, then $v(j)=-u(0)$. For an uncaptured column
index $i$, $u(i)+v(0)<0$, and hence
\[
 u(i)+v(j)=u(i)-u(0)<-v(0)-u(0)=-x<0.
\]
The other mixed case is identical. If both indices are captured, the
preactivation is exactly $-x<0$.
\end{proof}

\begin{lemma}[Finite capture]
\label{lem:app-finite-capture}
Every cross gate is captured after finitely many impacts, and every individual
impact occurs at a finite time.
\end{lemma}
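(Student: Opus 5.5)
The plan is to argue by contradiction, one stratum at a time, from the fact that an uncaptured cross gate increases at a rate proportional to $A$. Since there are only $2(p-1)$ cross gates and each impact captures at least one, it suffices to show the following. On any stratum with at least one uncaptured cross gate, the next impact occurs at a finite time. Put differently, the solution of the smooth stratum ODE cannot stay on that stratum for all $t\geq t_0$.

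First I will check that the stratum ODE does not blow up in finite time. Its right-hand side is $C^1$. The bound \eqref{app:A-bound} gives $A\leq (p-1)/(eX)$, so $\dot u,\dot v$ are bounded. Also $|\dot w(c)|\leq x/p^2$, and $\dot x\leq 2A/p^2$ is bounded. Hence $x$, $w$, $u$ and $v$ grow at most linearly, and the solution exists until it exits the stratum. By the conditional calculations in Lemmas~\ref{lem:app-sliding} and~\ref{lem:app-no-offcross}, the only possible exit is an impact. So suppose, for contradiction, that some uncaptured gate, say a row gate $r_j$, stays strictly negative for all $t\geq t_0$ (the column case is symmetric). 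Since $v(j)$ is constant while $j$ is uncaptured, and $\dot u(0)=A/((k+1)p^2)>0$ by Lemma~\ref{lem:app-sliding}, we get $u(0,t)<-v(j)$ for all $t$. Integrating therefore gives $\int_{t_0}^\infty A\,dt<\infty$. By \eqref{app:xdot}, $\dot x\leq 2A/p^2$, so $x$ is also bounded: $X<x(t)\leq x_{\max}$.

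Next I will turn the integrability of $A$ into a contradiction using the $w$-dynamics. By \eqref{app:ddot}, every $d_j$ is increasing, so $d_j(t)\geq D_0:=\min_i d_i(t_0)>0$. From \eqref{app:A} we get $A=\sum_{j\neq0}q_jd_j\geq D_0\sum_{j\neq0}q_j=D_0(1-q_0)$, hence $\int(1-q_0)\,dt<\infty$. In the other direction, \eqref{app:ddot} gives $\dot d_j\leq \frac{x}{p^2}\cdot 2(1-q_0)\leq \frac{2x_{\max}}{p^2}(1-q_0)$. So every $d_j$ stays bounded, say $d_j\leq D_1$. With $x\leq x_{\max}$ and $d_j\leq D_1$, the softmax gives $q_j=e^{-xd_j}/(1+\sum_i e^{-xd_i})\geq e^{-x_{\max}D_1}/p$. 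Therefore $A\geq (p-1)D_0e^{-x_{\max}D_1}/p>0$ for all $t\geq t_0$, which contradicts $\int A<\infty$. Thus every uncaptured gate is captured in finite time. Iterating over at most $2(p-1)$ strata shows that all cross gates are captured after finitely many impacts, each at a finite time.

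The main obstacle I expect is the bootstrap in the last paragraph. The a priori bound \eqref{app:A-bound} only shows that $A$ is small, not bounded below, and $A$ could a priori decay fast enough to be integrable. The key device is the two-sided comparison of $A$ and $\dot d_j$ with the same quantity $1-q_0$. This comparison only works once $x$ is known to be bounded, and boundedness of $x$ comes from the contradiction hypothesis. I also need to confirm that the bounds $x>X$ and $\gamma_j>\gamma_*$ continue to hold on the new stratum after each impact. This follows from the monotonicity of $x$ and of the $\gamma_j$, by \eqref{app:xdot} and \eqref{app:gammadot}.
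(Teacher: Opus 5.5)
Your proof is correct, but it rules out integrability of $A$ differently from the paper. The paper argues as follows. If $\int_T^\infty A\,dt<\infty$, then $u$ and $v$ stay bounded, and the Du--Hu--Lee balance invariant (Lemma~\ref{lem:app-balance}) then bounds $w$. The gaps $d_j$ stay above their values at time $T$, so the state lies in a compact set on which the continuous positive function $A$ has a positive minimum. This contradicts integrability, so $u(0,t)\to\infty$ and the uncaptured gate must hit zero.

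You never bound the full state. You use that $A$ depends on $w$ only through $x$ and the gaps $d_j$, and you bound the $d_j$ from above directly from the output dynamics. The key is the two-sided comparison $A\geq D_0(1-q_0)$ and $\dot d_j\leq \frac{2x}{p^2}(1-q_0)$. Together these turn $\int A<\infty$ into a bound $d_j\leq D_1$, and then into the explicit lower bound $A\geq (p-1)D_0e^{-x_{\max}D_1}/p$.

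The paper's route is shorter because the balance lemma is already proved and used elsewhere, but it is purely qualitative. Yours is self-contained on the stratum, needs no Clarke-level invariant, and gives explicit constants.

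One small slip: in your no-blow-up paragraph, $w$ grows at most quadratically, not linearly. This is because $|\dot w(c)|\leq x/p^2$ and $x$ itself grows at most linearly. The conclusion that there is no finite-time blow-up is unaffected.
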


\begin{proof}
Captured gates never leave zero, and there are only $2(p-1)$ cross gates.
Suppose that after some time $T$ the captured sets are constant and at least
one row gate remains uncaptured. If
\[
 \int_T^\infty A(t)\ d t<\infty,
\]
then \eqref{app:row-slide}--\eqref{app:col-slide} imply that all coordinates of
$u$ and $v$ remain bounded. Captured coordinates are affine negatives of
$u(0)$ or $v(0)$, and uncaptured coordinates are constant.
Lemma~\ref{lem:app-balance} then bounds $w$.

The gaps $d_j$ are bounded below by their positive values at time $T$, and
$x\geq X$. The state remains in a compact set on which the continuous
function $A(x,w)=\sum_{j\neq0}q_j(x,w)d_j$ is strictly positive. Thus
$A\geq a_0>0$, contradicting integrability. Therefore
\[
 \int_T^\infty A(t)\ d t=\infty.
\]
By \eqref{app:row-slide}, $u(0,t)\to\infty$. An uncaptured row gate equals
$r_j(t)=u(0,t)+v(j,T)$ and must hit zero at a finite time, a contradiction.
The column case is identical.

At a simultaneous impact, add every gate that reaches zero to $R$ or $C$.
There are finitely many impacts. On each stratum the selected vector field is
smooth, so the construction gives an absolutely continuous Clarke trajectory.
It is complete because \eqref{app:A-bound} and \eqref{app:xdot} bound the
$u$- and $v$-velocities on finite intervals, while
$|\dot w(c)|\leq x/p^2$ and $x$ remains bounded on finite intervals.
\end{proof}

\subsubsection{The full sliding face}

After the capture time, write
\[
 a=u(0),\qquad b=v(0),\qquad x=a+b.
\]
The cross-face relations force
\begin{equation}
 u=(a,-b,\ldots,-b),
 \qquad
 v=(b,-a,\ldots,-a).
\label{app:full-face}
\end{equation}
Every off-cross preactivation is $-x<0$. Since $k=\ell=p-1$,
Lemma~\ref{lem:app-sliding} gives
\begin{equation}
 \dot a=\dot b=\frac{A}{p^3},
 \qquad
 \dot x=\frac{2A}{p^3}.
\label{app:full-flow}
\end{equation}
In particular, $a-b$ is constant.

\begin{proposition}[Divergence and one-example memorization]
\label{prop:app-divergence}
Along the full-face trajectory,
\[
 x(t)\to\infty,
 \qquad
 \Norm{w(t)}\to\infty,
 \qquad
 \Norm{\theta(t)}\to\infty.
\]
The loss of the central example tends to zero, every other example has loss
$\log p$, and
\begin{equation}
 \mathcal L(t)\longrightarrow\frac{p^2-1}{p^2}\log p.
\label{app:loss-limit}
\end{equation}
\end{proposition}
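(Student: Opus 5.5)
The plan is to reduce everything to the two scalar quantities $x(t)$ and $w(t)$ on the full face \eqref{app:full-face}, where the dynamics are given by \eqref{app:full-flow} together with \eqref{app:w0dot}--\eqref{app:wjdot}.

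\textbf{Step 1: $x\to\infty$.} I would argue by contradiction, repeating the argument of Lemma~\ref{lem:app-finite-capture}. Suppose $x$ stays bounded. Since $x$ is increasing by \eqref{app:full-flow}, it then has a finite limit, and so $\int_T^\infty A\,dt<\infty$. Because $a-b$ is constant and $a+b=x$, both $a$ and $b$ stay bounded, so $u$ and $v$ are bounded by \eqref{app:full-face}. Lemma~\ref{lem:app-balance} then bounds $w$. The gaps $d_j$ are increasing by \eqref{app:ddot}, hence bounded below by their values at the capture time, and $x\ge X$. The state therefore lies in a compact set on which $A=\sum_j q_j d_j$ is continuous and strictly positive. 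This gives $A\ge a_0>0$, which contradicts integrability. Hence $x\to\infty$.

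\textbf{Step 2: $\|w\|\to\infty$ and $\|\theta\|\to\infty$.} Since $a-b$ is constant and $x\to\infty$, both $a$ and $b$ tend to $+\infty$. Then $\|u\|^2+\|v\|^2\ge a^2+b^2\to\infty$, so $\|\theta\|\to\infty$. Lemma~\ref{lem:app-balance} gives
\[
\|w\|^2=\|u\|^2+\|v\|^2-\mathrm{const},
\]
so $\|w\|\to\infty$ as well.

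\textbf{Step 3: the central loss tends to $0$.} The central loss is
\[
\ell_0=\log\Bigl(1+\sum_{j\neq0}e^{-xd_j}\Bigr).
\]
Each $d_j$ is increasing and positive, so $d_j\ge d_j(T)>0$. As $x\to\infty$, every term $e^{-xd_j}\to0$, hence $\ell_0\to0$.

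\textbf{Step 4: the other examples.} Every other example has preactivation $\le0$: cross inputs sit at $0$ and off-cross inputs at $-x$. Their activation is therefore $h=0$, all logits vanish, and each such loss is exactly $\log p$. Summing the $p^2-1$ such examples and dividing by $p^2$ gives \eqref{app:loss-limit}.

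The main obstacle is Step 1, specifically establishing that $A$ is bounded below on the relevant compact set. This uses the monotonicity of $d_j$, which requires $1-q_0+q_j>0$ in \eqref{app:ddot}. That holds because $q_0<1$.
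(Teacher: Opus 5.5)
Your proof is correct and follows the paper's argument essentially step for step. Both use the same contradiction: the face relations and the constancy of $a-b$ bound $u$ and $v$, Lemma~\ref{lem:app-balance} bounds $w$, and the increasing gaps $d_j$ give a positive lower bound on $A$ over a compact set. (You phrase the contradiction through integrability of $A$ rather than a uniform growth rate of $x$, which is equivalent.) The central and noncentral losses are then computed identically, and the only cosmetic difference is that you get $\|\theta\|\to\infty$ directly from $a,b\to\infty$ and then $\|w\|\to\infty$ from the balance invariant, whereas the paper argues in the reverse order.
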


\begin{proof}
If $x$ were bounded, then \eqref{app:full-face} and constancy of $a-b$ would
bound $u$ and $v$. Lemma~\ref{lem:app-balance} would bound $w$. The increasing
gaps $d_j$ would remain positive, placing the trajectory in a compact set on
which $A$ has a positive minimum. Equation \eqref{app:full-flow} would then
force $x$ to grow at a uniform positive rate, a contradiction. Hence
$x\to\infty$. The balance invariant forces $\Norm w\to\infty$ and therefore
$\Norm\theta\to\infty$.

The central loss is
\[
 \ell_0(x,w)=\log\left(1+\sum_{j\neq0}e^{-xd_j}\right)\longrightarrow0,
\]
because every $d_j$ is bounded below by a positive constant. Every other
activation is zero, so its loss is $\log p$. This proves
\eqref{app:loss-limit}.
\end{proof}

\subsection{Projective asymptotics and the flat Fourier spectrum}

Let
\begin{equation}
 w_-:=\frac1{p-1}\sum_{j\neq0}w(j),
 \qquad
 d:=w(0)-w_-,
\label{app:wminus}
\end{equation}
and write
\begin{equation}
 e_j:=w(j)-w_-,
 \qquad
 \sum_{j\neq0}e_j=0.
\label{app:ej}
\end{equation}

\begin{lemma}[Bounded wrong-class spread]
\label{lem:app-spread}
The diameter
\[
 \max_{j\neq0}w(j)-\min_{j\neq0}w(j)
\]
is nonincreasing. Consequently all $e_j(t)$ remain uniformly bounded.
\end{lemma}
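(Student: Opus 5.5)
Along the trajectory, only the central example $(0,0)$ changes $w$; on the full sliding face (and also during the capture stage) every other example is either strictly inactive or sits at a kink. By Lemma~\ref{lem:app-kink}, kink examples contribute nothing to the $w$-gradient. Hence \eqref{app:w0dot}--\eqref{app:wjdot} govern $w$ for all time. The wrong-class coordinates therefore evolve by
\[
 \dot w(j)=-\frac{x}{p^2}q_j,\qquad q_j=\frac{e^{xw(j)}}{\sum_c e^{xw(c)}},\qquad j\neq0,
\]
with $x>0$ throughout (by \eqref{app:xdot} and \eqref{app:full-flow}, $x\geq X>0$).

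The key observation is that $q_j$ is a strictly increasing function of $w(j)$ at a fixed time, since the denominator is common to all $j$. So larger wrong coordinates are pushed down faster. For two indices $i,j\neq0$,
\[
 \frac{d}{dt}\bigl(w(i)-w(j)\bigr)=-\frac{x}{p^2}\bigl(q_i-q_j\bigr),
\]
and this has the opposite sign to $w(i)-w(j)$. Thus $|w(i)-w(j)|$ is nonincreasing for every pair. In particular the maximum pairwise gap, which is the diameter, is nonincreasing.

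To make this rigorous without differentiating a max, I would prove pairwise monotonicity first. The functions $w(j)$ are absolutely continuous, and $t\mapsto |w(i)-w(j)|$ has derivative $\le 0$ almost everywhere. This holds because where $w(i)\neq w(j)$ the sign argument applies, and on the set where $w(i)=w(j)$ the derivative of the difference vanishes a.e. The diameter is a finite max of these nonincreasing functions, so it is itself nonincreasing.

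For the consequence: $e_j=w(j)-w_-$ is the deviation of $w(j)$ from the mean of the wrong coordinates, so $|e_j|$ is at most the diameter. Hence $|e_j(t)|$ is bounded by the diameter at the entry time of the sliding phase, or equivalently at time $0$, since the same equations hold during capture.

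The only delicate step is justifying that the $w$-dynamics are exactly \eqref{app:wjdot} at all times, including at impact instants and on kink strata. I would settle this by citing the zero $w$-component in \eqref{app:kink-subdiff} together with the independent-selection clause of Lemma~\ref{lem:app-kink}. The remaining calculation is routine.
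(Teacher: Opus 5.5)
Your proposal is correct and follows essentially the paper's proof: the key step in both is that $q_j-q_k$ has the sign of $w(j)-w(k)$, since $x>0$ and the softmax denominator is common, so $\frac{d}{dt}\bigl(w(j)-w(k)\bigr)=-\frac{x}{p^2}(q_j-q_k)$ contracts every wrong-class gap. The only difference is technical: you obtain monotonicity of the diameter as a maximum of the pairwise nonincreasing functions $|w(i)-w(j)|$, whereas the paper bounds the upper Dini derivative of the maximum directly. Your version avoids differentiating a max, which makes it slightly cleaner.
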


\begin{proof}
For $j,k\neq0$,
\[
 \frac\partial{\partial t}(w(j)-w(k))=-\frac{x}{p^2}(q_j-q_k).
\]
The sign of $q_j-q_k$ is the sign of $w(j)-w(k)$. Hence a maximal wrong
weight decreases at least as fast as a minimal wrong weight, so the upper Dini
derivative of the diameter is nonpositive.
\end{proof}

Since $\mu$ is constant,
\begin{equation}
 w(0)=\mu+\frac{p-1}{p}d,
 \qquad
 w(j)=\mu-\frac1p d+e_j.
\label{app:wdecomp}
\end{equation}
Thus
\begin{equation}
 \Norm w^2=p\mu^2+\frac{p-1}{p}d^2+\sum_{j\neq0}e_j^2.
\label{app:wnorm}
\end{equation}
If $\Delta=a-b$, then
\begin{equation}
 \Norm u^2+\Norm v^2
 =p(a^2+b^2)=\frac p2x^2+\frac p2\Delta^2.
\label{app:uvnorm}
\end{equation}
Combining \eqref{app:balance}, \eqref{app:wnorm}, \eqref{app:uvnorm}, and
Lemma~\ref{lem:app-spread} gives
\begin{equation}
 \frac p2x^2-\frac{p-1}{p}d^2=O(1).
\label{app:balance-asymptotic}
\end{equation}
Since $d>0$ and $x\to\infty$,
\begin{equation}
 \frac d x\longrightarrow
 \kappa_p:=p\sqrt{\frac1{2(p-1)}}.
\label{app:kappa}
\end{equation}
Define
\begin{equation}
 q:=e_0-\frac1p\mathbf{1}
 =\left(\frac{p-1}{p},-\frac1p,\ldots,-\frac1p\right).
\label{app:qpoint}
\end{equation}
From \eqref{app:full-face},
\begin{equation}
 u_0=xq,
 \qquad
 v_0=xq,
\label{app:uvcentered}
\end{equation}
where $u_0$ and $v_0$ are as in \eqref{intro::renormalization}.
Equations \eqref{app:ej}--\eqref{app:wdecomp} give
\begin{equation}
 w_0=dq+e,
\label{app:wcentered}
\end{equation}
where $e(0)=0$ and $e(j)=e_j$ for $j\neq0$. Hence
\begin{equation}
 \frac{u_0}{x}\to q,
 \qquad
 \frac{v_0}{x}\to q,
 \qquad
 \frac{w_0}{x}\to\kappa_pq.
\label{app:centered-limit}
\end{equation}
For the uncentered limit, $a/x\to1/2$ and $b/x\to1/2$. Let
\[
 V:=\left(\frac12,-\frac12,\ldots,-\frac12\right).
\]
Then
\[
 \frac ux\to V,
 \qquad
 \frac vx\to V,
 \qquad
 \frac wx\to\kappa_pq.
\]
Since
\[
 \Norm V^2=\frac p4,
 \qquad
 \Norm q^2=\frac{p-1}{p},
 \qquad
 \kappa_p^2\Norm q^2=\frac p2,
\]
one has $\Norm\theta/x\to\sqrt p$, and therefore
\begin{equation}
 \frac{\theta(t)}{\Norm{\theta(t)}}
 \longrightarrow
 \frac1{\sqrt p}(V,V,\kappa_pq).
\label{app:projective-limit}
\end{equation}
Finally,
\[
 \widehat q(0)=0,
 \qquad
 \widehat q(\zeta)=\frac1{\sqrt p}\quad(\zeta\neq0),
\]
so
\begin{equation}
 \frac{\Norm{\Pi_\zeta q}^2}{\Norm q^2}=\frac2{p-1}.
\label{app:qfraction}
\end{equation}
By \eqref{app:centered-limit}, the same limit holds for the combined fraction
$F_\zeta$.

\subsection{Why every Clarke trajectory is the sliding trajectory}

The abstract uniqueness mechanism used in this subsection is classical.
For a semiconvex function, its subdifferential is hypomonotone
\cite[Example~12.28(b)]{rockafellarwets1998}, and the resulting comparison
estimate yields uniqueness of subgradient trajectories by Gronwall's
inequality; compare
\cite[Lemma~2.1(a)]{marcellinthibault2006}. The loss-specific content here is
the verification that the present ReLU cross-entropy loss is locally
semiconvex in a neighborhood of every point of the constructed sliding
trajectory.
\begin{lemma}[Hypomonotonicity]
\label{lem:app-hypomonotone}
Suppose a locally Lipschitz function $F$ is $C$-semiconvex on a convex
neighborhood $V$, meaning
\[
 x\longmapsto F(x)+\frac C2\Norm x^2
\]
is convex on $V$. Then for all $x_1,x_2\in V$ and
$\xi_i\in\clarke F(x_i)$,
\begin{equation}
 \ip{\xi_1-\xi_2}{x_1-x_2}\geq-C\Norm{x_1-x_2}^2.
\label{app:hypomonotonicity}
\end{equation}
\end{lemma}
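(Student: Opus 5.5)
The plan is to reduce the statement to the convex case, where monotonicity of the convex subdifferential is classical. I would first set $G(x):=F(x)+\frac C2\Norm{x}^2$, which is convex on $V$ by hypothesis. Since $\frac C2\Norm{x}^2$ is $C^1$, the Clarke sum rule with one smooth summand holds with equality \cite[Corollary~1 to Proposition~2.3.3]{clarke1990}. This gives
\[
 \clarke G(x)=\clarke F(x)+Cx ,
\]
so $\xi_i\in\clarke F(x_i)$ yields $\eta_i:=\xi_i+Cx_i\in\clarke G(x_i)$. One point needs care: $V$ is only a neighborhood and $G$ is convex only on $V$. I would handle this by working at interior points $x_1,x_2$ of $V$ (which is all that is used later, since $V$ is open), where the Clarke subdifferential is a local object.

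The second step is to identify the Clarke subdifferential of the locally Lipschitz convex function $G$ at an interior point of its convexity domain with the convex-analysis subdifferential \cite[Proposition~2.2.7]{clarke1990}. Since that proposition is stated for functions convex on an open convex set, it applies directly on $V$. Consequently each $\eta_i$ satisfies the subgradient inequality
\[
 G(y)\ge G(x_i)+\ip{\eta_i}{y-x_i}\qquad(y\in V).
\]

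The third step is to apply this inequality with $y=x_2$ for $\eta_1$ and $y=x_1$ for $\eta_2$, and add the two; convexity of $V$ ensures both points are admissible. This gives $\ip{\eta_1-\eta_2}{x_1-x_2}\ge0$. Substituting $\eta_i=\xi_i+Cx_i$ yields
\[
 \ip{\xi_1-\xi_2}{x_1-x_2}+C\Norm{x_1-x_2}^2\ge0,
\]
which is \eqref{app:hypomonotonicity}.

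The only real obstacle is the second step, namely matching the Clarke and convex subdifferentials when convexity is only local. If citing Proposition~2.2.7 is felt insufficient, I would instead argue directly. Clarke regularity of convex functions gives $G^\circ(x;d)=G'(x;d)$, and for $x$ interior the convex inequality $G'(x_1;x_2-x_1)\le G(x_2)-G(x_1)$ follows from monotonicity of difference quotients along the segment, which lies in $V$. Combined with $\ip{\eta_1}{d}\le G^\circ(x_1;d)$, this gives the subgradient inequality directly.
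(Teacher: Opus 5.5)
Your proof is correct and follows essentially the same route as the paper's: add $\frac C2\Norm{\cdot}^2$, identify the Clarke subdifferential of the resulting locally convex function with the convex subdifferential, so that $\xi_i+Cx_i$ are convex subgradients, and then add the two subgradient inequalities to obtain monotonicity. Beyond the paper's sketch, you supply the two facts it leaves implicit: the exact sum rule when one summand is strictly differentiable, and Clarke's Proposition~2.2.7 identifying the two subdifferentials. Your restriction to interior points of $V$ is also a sensible precision, since at boundary points of a non-open $V$ the Clarke subdifferential depends on values of $F$ outside $V$.
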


\begin{proof}
This is the standard hypomonotonicity estimate for the subdifferential of a
semiconvex function; see
\cite[Example~12.28(b)]{rockafellarwets1998}. For completeness, adding
$C\Norm{\cdot}^2/2$ converts the Clarke subdifferential locally into the
convex subdifferential. Thus
\[
 \xi_i+Cx_i\in\partial\left(F+\frac C2\Norm{\cdot}^2\right)(x_i).
\]
Monotonicity of the convex subdifferential gives
\[
 \ip{(\xi_1+Cx_1)-(\xi_2+Cx_2)}{x_1-x_2}\geq0,
\]
which is \eqref{app:hypomonotonicity}.
\end{proof}

\begin{lemma}[Gronwall uniqueness for semiconvex subgradient flow]
\label{lem:app-local-uniqueness}
If $F$ is $C$-semiconvex on $V$, then two absolutely continuous solutions of
\[
 \dot x(t)\in-\clarke F(x(t))
\]
that start at the same point and remain in $V$ are identical.
\end{lemma}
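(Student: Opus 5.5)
The plan is the standard Gronwall argument built on Lemma~\ref{lem:app-hypomonotone}. Let $x_1(t)$ and $x_2(t)$ be two absolutely continuous solutions on a common interval $[0,T]$, both with $x_1(0)=x_2(0)$ and both remaining in $V$. Set
\[
 \phi(t):=\tfrac12\Norm{x_1(t)-x_2(t)}^2 .
\]
Since $x_1,x_2$ are absolutely continuous, so is $\phi$, and for almost every $t$
\[
 \dot\phi(t)=\ip{\dot x_1(t)-\dot x_2(t)}{x_1(t)-x_2(t)}.
\]
We avoid measurable-selection issues by working only at times $t$ where both derivatives exist. Such times form a full-measure set, and at each of them the inclusion gives
\[
 \xi_i(t):=-\dot x_i(t)\in\clarke F(x_i(t)).
\]

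At these times we apply Lemma~\ref{lem:app-hypomonotone} with $x_i=x_i(t)\in V$ and the subgradients $\xi_i(t)$. This yields
\[
 \dot\phi(t)=-\ip{\xi_1(t)-\xi_2(t)}{x_1(t)-x_2(t)}\le C\Norm{x_1(t)-x_2(t)}^2=2C\phi(t)
\]
for almost every $t$. The function $e^{-2Ct}\phi(t)$ is absolutely continuous with almost-everywhere nonpositive derivative. It is therefore nonincreasing, and since $\phi(0)=0$ and $\phi\ge0$, we get $\phi\equiv0$ on $[0,T]$, i.e.\ $x_1=x_2$.

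Two technical points need checking. First, Lemma~\ref{lem:app-hypomonotone} requires $V$ convex, because the identification of $\clarke F$ with the convex subdifferential of $F+\frac C2\Norm{\cdot}^2$ is made on $V$. That identification relies on the Clarke subdifferential of a locally Lipschitz convex function agreeing with the convex subdifferential in the interior of $V$, together with the sum rule for the smooth quadratic. If $V$ were merely open, I would instead argue locally: cover the compact trajectory image by finitely many convex balls on which $F$ is semiconvex, and propagate equality interval by interval using a connectedness (first-disagreement-time) argument. Second, the solutions need only be absolutely continuous, not Lipschitz, and the argument above uses nothing more than the almost-everywhere chain rule for $\phi$. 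I expect the convexity/interior bookkeeping to be the only real obstacle; the rest is routine.
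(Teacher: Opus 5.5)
Your proposal is correct and follows the paper's argument: take $\xi_i(t)=-\dot x_i(t)\in\clarke F(x_i(t))$, apply the hypomonotonicity estimate of Lemma~\ref{lem:app-hypomonotone} to get $\frac12\frac{d}{dt}\Norm{x_1-x_2}^2\le C\Norm{x_1-x_2}^2$ almost everywhere, and conclude by Gronwall from the equal initial data. Your additional details are sound and only spell out what the paper leaves implicit: restricting to common differentiability times, using the integrating factor $e^{-2Ct}$, and noting that $V$ must be a convex neighborhood, as in Lemma~\ref{lem:app-hypomonotone}.
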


\begin{proof}
This is the standard Gronwall comparison argument for hypomonotone
subgradient flows; compare
\cite[Lemma~2.1(a)]{marcellinthibault2006}. Choose
$\xi_i(t)\in\clarke F(x_i(t))$ with $\dot x_i=-\xi_i$. By
Lemma~\ref{lem:app-hypomonotone}, almost everywhere,
\[
 \frac12\frac\partial{\partial t}\Norm{x_1-x_2}^2
 =-\ip{\xi_1-\xi_2}{x_1-x_2}
 \leq C\Norm{x_1-x_2}^2.
\]
Gronwall's inequality and equal initial data give $x_1=x_2$.
\end{proof}

\begin{proposition}[Uniqueness along the constructed path]
\label{prop:app-uniqueness}
For every $\theta_0\in U_{\mathrm{slide}}$, every Clarke trajectory starting
from $\theta_0$ coincides with the constructed sliding trajectory.
\end{proposition}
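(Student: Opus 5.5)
The argument will be a continuation (open-closed) argument combined with the local uniqueness Lemma~\ref{lem:app-local-uniqueness}. Fix $\theta_0\in U_{\mathrm{slide}}$, let $\bar\theta$ be the constructed sliding trajectory, and let $\theta$ be an arbitrary complete Clarke trajectory from $\theta_0$. Set
\[
 \tau:=\sup\{t\geq0:\theta(s)=\bar\theta(s)\text{ for all }s\in[0,t]\}.
\]
Suppose $\tau<\infty$. By continuity $\theta(\tau)=\bar\theta(\tau)=:\theta_*$. If I can exhibit a convex neighborhood $V$ of $\theta_*$ on which $\mathcal L$ is $C$-semiconvex, then for small $\eta>0$ both trajectories remain in $V$ on $[\tau,\tau+\eta]$, since absolutely continuous solutions of a locally bounded inclusion are continuous. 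Lemma~\ref{lem:app-local-uniqueness}, applied from the common point $\theta_*$, then forces them to agree on $[\tau,\tau+\eta]$. This contradicts the definition of $\tau$, so $\tau=\infty$. The whole proof therefore reduces to local semiconvexity of $\mathcal L$ at every point of $\bar\theta$.

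To establish that, I decompose $\mathcal L=p^{-2}\sum_{a,b}H_{y_{ab}}(u(a)+v(b),w)$ and sort the summands by the sign pattern at $\theta_*$, using Lemma~\ref{lem:app-no-offcross}.
\begin{itemize}
\item The central gate $(0,0)$ is strictly positive, so its summand is smooth near $\theta_*$, hence semiconvex.
\item Each uncaptured cross gate and each off-cross gate is strictly negative, so its summand is locally the constant $\log p$.
\item Each captured cross gate $(0,j)$ or $(j,0)$ sits at the kink with label $y=j$, and $\gamma_j(w_*)>\gamma_*>0$ holds because $\gamma_j$ increases by \eqref{app:gammadot}. Lemma~\ref{lem:app-semiconvex} then makes $H_j$ semiconvex near $(0,w_*)$.
\end{itemize}
Semiconvexity survives composition with the affine map $(u,v,w)\mapsto(u(a)+v(b),w)$, because a convex function composed with an affine map is convex and the added quadratic is dominated by a quadratic in $\theta$. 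It also survives finite sums, since the semiconvexity constants add. Intersecting the finitely many neighborhoods and taking a ball gives the required convex $V$. For an impact time $\tau$, where an uncaptured cross gate reaches exactly zero, the same classification applies with that gate now at the kink, so the time-$\tau$ kink case is handled by the same lemma.

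One more check is needed: at $\theta_*$ the Clarke subdifferential used in Lemma~\ref{lem:app-local-uniqueness} must be that of the full $\mathcal L$, and the constructed trajectory must genuinely be a Clarke trajectory of $\mathcal L$. Lemma~\ref{lem:app-kink} and the stratum construction already verify the latter. The uniqueness lemma needs nothing about the selection; it uses only hypomonotonicity of $\clarke\mathcal L$ on $V$, and Lemma~\ref{lem:app-hypomonotone} gives this, because adding a quadratic converts the Clarke subdifferential of a semiconvex function into a convex subdifferential.

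I expect the main obstacle to be the kink summands at which the gate is negative \emph{before} capture and zero afterward. Semiconvexity must hold on a full neighborhood that includes both sides of the hyperplane, which is exactly what Lemma~\ref{lem:app-semiconvex} provides, provided the positivity $\gamma_j>0$ is verified at every impact. A second point needs care. Lemma~\ref{lem:app-semiconvex} is stated for a single-example function of $(z,w)$, and the kink of $\mathrm{ReLU}$ must not become a concave kink in some direction, which would happen if $\gamma_y<0$. The hypothesis $\gamma_j>\gamma_*$, propagated by monotonicity of $\gamma_j$ along $\bar\theta$, is precisely what excludes this. Apart from that, the argument is the routine continuation described above.
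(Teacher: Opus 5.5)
Your proposal is correct and follows essentially the same route as the paper. Both prove local semiconvexity of $\mathcal L$ near every point of $\bar\theta$: the kink terms via Lemma~\ref{lem:app-semiconvex} using $\gamma_j(t)\geq\gamma_j(0)>\gamma_*$ together with stability under affine precomposition, the central term because it is smooth, and the inactive terms because they are locally constant; both then close with a first-separation-time argument using Lemma~\ref{lem:app-local-uniqueness}.
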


\begin{proof}
Let $\bar\theta$ be the constructed trajectory. At every point of
$\bar\theta$, the central gate is strictly positive, every off-cross gate is
strictly negative, and every zero gate is a cross gate with target $j\neq0$.
Moreover, \eqref{app:gammadot} gives
\[
 \gamma_j(t)\geq\gamma_j(0)>\gamma_*>0.
\]
Lemma~\ref{lem:app-semiconvex} shows that every zero-gate term is locally
semiconvex in its gate and output variables. We use here that semiconvexity is
preserved under affine precomposition. Indeed, if $f$ is $C$-semiconvex and
$x\mapsto Ax+b$ is affine, then
\[
 f(Ax+b)+\frac{C\|A\|_{\mathrm{op}}^2}{2}\Norm{x}^2
 =
 \left(f+\frac C2\Norm{\cdot}^2\right)(Ax+b)
 +\frac C2\left(\|A\|_{\mathrm{op}}^2\Norm{x}^2-\Norm{Ax+b}^2\right).
\]
The first term on the right is convex, while the Hessian of the quadratic
part of the second is
\[
 C\left(\|A\|_{\mathrm{op}}^2I-A^{\mathsf T}A\right)\succeq0.
\]
Thus $f\circ(A\,\cdot+b)$ is $C\|A\|_{\mathrm{op}}^2$-semiconvex. Applying
this to the affine gate map shows that every zero-gate loss term is locally
semiconvex as a function of the full parameter vector. The smooth central term
is locally semiconvex after increasing the constant, and the inactive terms
are locally constant. Hence $\mathcal L$ is locally semiconvex near every
point of $\bar\theta$.

If another Clarke trajectory $\theta$ with the same initial condition first
separated from $\bar\theta$ at time $T$, choose a convex neighborhood of
$\theta(T)=\bar\theta(T)$ on which $\mathcal L$ is semiconvex. Both paths
remain in this neighborhood for a short interval after $T$, and
Lemma~\ref{lem:app-local-uniqueness} forces them to agree there, a
contradiction. Thus they agree for all time.
\end{proof}

Theorem~\ref{thm:app-persistent} follows from the construction,
Proposition~\ref{prop:app-divergence}, the projective limit
\eqref{app:projective-limit}, the Fourier calculation \eqref{app:qfraction},
and Proposition~\ref{prop:app-uniqueness}.

\subsection{A finite-time dead-neuron region}

Begin with a symmetric base point
\begin{align}
 u(0)&=v(0)=\frac{s_0}{2},
 &u(j)&=v(j)=-M\quad(j\neq0),
 &\label{app:dead-u}\\
 w(0)&=-B,
 &w(j)&=0\quad(j\neq0),
 &\label{app:dead-w}
\end{align}
where $M,B>0$ and $s_0>0$ is small. Only $(0,0)$ is active. Define
\begin{equation}
 \gamma:=\mu-w(0)>0
\label{app:dead-g}
\end{equation}
Here $u(0)$, $v(0)$, and $w(0)$ denote coordinates indexed by $0$. To
distinguish these from evaluation at the initial time, set
\begin{equation}
 \gamma_{\mathrm{in}}:=\gamma\big|_{t=0}.
\label{app:gin}
\end{equation}
Also define
\begin{equation}
 D(s,w):=\sum_cq_c(s,w)w(c)-w(0).
\label{app:D}
\end{equation}
The log-partition function
\[
 \Phi(s):=\log\left(\frac1p\sum_ce^{sw(c)}\right)
\]
is convex, so
\begin{equation}
 D(s,w)=\Phi'(s)-w(0)\geq\Phi'(0)-w(0)=g.
\label{app:D-lower}
\end{equation}
While the central gate is positive,
\begin{align}
 \dot s&=-\frac2{p^2}D(s,w),
 &\label{app:dead-sdot}\\
 \dot \gamma&=-\frac{s}{p^2}(1-q_0).
 &\label{app:dead-gdot}
\end{align}
Let $[0,\tau)$ be the maximal interval on which
\[
 s(t)>0
 \qquad\text{and}\qquad
 \gamma(t)>0.
\]
On this interval, \eqref{app:D-lower} gives $D(s,w)\geq \gamma>0$, so
$\dot s<0$ and $s$ may be used as the independent variable. Moreover,
\[
 \frac{\partial \gamma}{\partial s}
 =\frac{s(1-q_0)}{2D(s,w)}
 \leq\frac{s}{2\gamma}.
\]
Thus
\begin{equation}
 \frac{\partial(\gamma^2)}{\partial s}\leq s.
\label{app:g2ds}
\end{equation}
Integrating from $s(t)$ to $s_0$ gives
\begin{equation}
 \gamma(t)^2
 \geq \gamma_{\mathrm{in}}^2-\frac{s_0^2-s(t)^2}{2}
 \geq \gamma_{\mathrm{in}}^2-\frac{s_0^2}{2}.
\label{app:g-lower}
\end{equation}
Choose $s_0$ so that
\begin{equation}
 \gamma_{\mathrm{in}}^2>\frac{s_0^2}{2},
\label{app:dead-condition}
\end{equation}
and set
\[
 \gamma_*:=\sqrt{\gamma_{\mathrm{in}}^2-\frac{s_0^2}{2}}>0.
\]
Then $\gamma(t)\geq \gamma_*$ throughout $[0,\tau)$, so the maximal interval cannot end
because $\gamma$ vanishes. Furthermore,
\[
 \dot s=-\frac2{p^2}D(s,w)\leq-\frac{2\gamma_*}{p^2}.
\]
Hence $s$ reaches zero in finite time. By maximality, $\tau=T$ is precisely
this first hitting time. Both $u(0)$ and $v(0)$ decrease on $[0,T]$, so every
other preactivation remains strictly negative up to and including time $T$.

\begin{lemma}[No continuation through the dead face]
\label{lem:app-no-continuation}
At time $T$, every Clarke trajectory remains at the same parameter vector for
all later times.
\end{lemma}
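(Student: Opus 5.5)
The plan is to localize the loss near $\theta(T)$ so that only the central example $(0,0)$ matters, and then to exploit the sign of the gate velocity. We write $\theta_T:=\theta(T)$ and $s=u(0)+v(0)$. At $\theta_T$ we have $s=0$, $\gamma=\mu-w(0)\geq\gamma_*>0$, and every other preactivation is strictly negative.

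First, we choose a convex neighborhood $V$ of $\theta_T$ on which three things hold: every gate $(a,b)\neq(0,0)$ stays strictly negative, $\gamma_0(w)=\mu(w)-w(0)\geq\gamma_*/2$, and (by continuity of $D$ together with \eqref{app:D-lower}) $D(s,w)\geq\gamma_*/4$ whenever $s>0$. On $V$ every off-center summand of $\mathcal L$ equals the constant $\log p/p^2$. Hence
\[
\mathcal L(u,v,w)=\frac1{p^2}H_0(u(0)+v(0),w)+\text{const}\qquad\text{on }V,
\]
with $H_0$ as in Lemma~\ref{lem:app-kink}.

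Second, we describe $\clarke\mathcal L$ at every point of $V$. We use Lemma~\ref{lem:app-kink}, the affine chain rule, and the fact that Clarke subdifferentials depend only on the function near the point.
\begin{itemize}
\item If $s<0$, the subdifferential is $\{0\}$.
\item If $s>0$, it is the ordinary gradient. Its $u(0)$ and $v(0)$ components are both $D/p^2$, so the induced velocity satisfies $\dot s=-2D/p^2\leq-\gamma_*/(2p^2)<0$.
\item If $s=0$, every element has the form $\frac{\alpha\gamma_0(w)}{p^2}\bigl(e_0^{(u)}+e_0^{(v)}\bigr)$ with $\alpha\in[0,1]$, and its $w$-component is zero. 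Consequently $\dot s=-2\alpha\gamma_0/p^2\leq0$ and $\dot w=0$.
\end{itemize}

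Third, let $\theta$ be any Clarke trajectory with $\theta(T)=\theta_T$, and let $T'>T$ be such that $\theta([T,T'])\subset V$. By the case analysis, $\dot s\leq0$ almost everywhere on $[T,T']$. Since $s$ is absolutely continuous and $s(T)=0$, we get $s\leq0$ on $[T,T']$. In particular the set $\{s>0\}$ is never entered, and this is the step I expect to be the crux. It is exactly where the positivity $\gamma\geq\gamma_*$ from \eqref{app:g-lower} is used, since it rules out re-entry into the active half-space through a selection with positive slope.

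On $\{t: s(t)<0\}$ we have $\dot\theta=0$. On the level set $\{t:s(t)=0\}$, the absolutely continuous function $s$ has $\dot s=0$ almost everywhere. Because $\gamma_0>0$ there, this forces $\alpha=0$, hence $\dot\theta=0$ almost everywhere on that set too. So $\dot\theta=0$ almost everywhere on $[T,T']$, and $\theta$ is constant there.

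Finally, a standard continuation argument gives the result for all time. Let $T^\sharp$ be the supremum of times up to which $\theta\equiv\theta_T$. If $T^\sharp<\infty$, the same local argument applied at $\theta(T^\sharp)=\theta_T$ extends constancy beyond $T^\sharp$, which is a contradiction. Therefore every Clarke trajectory remains at $\theta_T$ for all $t\geq T$.
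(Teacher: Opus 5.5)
Your proof is correct and follows essentially the same route as the paper: the kink inclusion $\dot s\in[-2\gamma/p^2,0]$ from Lemma~\ref{lem:app-kink} (valid because $\gamma>0$), together with $\dot s<0$ on $\{s>0\}$ and $\dot s=0$ on $\{s<0\}$, keeps $s\leq 0$, and the fact that an absolutely continuous function has zero derivative almost everywhere on a level set forces the kink coefficient to vanish. The only difference is the localization: the paper works up to the first time $\tau_+$ at which a noncentral gate hits zero and argues via $s^{\pm}$, whereas your convex neighborhood $V$ (on which also $\gamma_0\geq\gamma_*/2$) combined with the supremum/continuation argument makes explicit the lower bound on $\gamma$ along the trajectory, which the paper uses only tacitly when it writes $g(T)$ in the scalar inclusion.
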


\begin{proof}
At time $T$, every noncentral preactivation is strictly negative. Let
$\tau_+\in(T,\infty]$ be the first later time at which one of these
preactivations reaches zero, with $\tau_+=\infty$ if no such time exists. On
$[T,\tau_+)$, all noncentral gates remain strictly negative, so only the central
gate can contribute. At $s=0$, Lemma~\ref{lem:app-kink} gives the scalar
inclusion
\begin{equation}
 \dot s\in\left[-\frac{2g(T)}{p^2},0\right].
\label{app:scalar-inclusion}
\end{equation}
For $s<0$, $\dot s=0$, while for $s>0$,
\eqref{app:dead-sdot} gives $\dot s<0$.

Let $s^+=\max\{s,0\}$ and $s^-=\max\{-s,0\}$. The chain rule for absolutely
continuous functions gives $\dot s^+\leq0$ almost everywhere, so
$s^+(t)=0$ after $T$. Also, an absolutely continuous function has derivative
zero almost everywhere on each level set; therefore $\dot s=0$ almost
everywhere on $\{s=0\}$ and on $\{s<0\}$. Hence $\dot s^-=0$ almost
everywhere and $s^-(t)=0$. Thus $s(t)=0$ throughout $[T,\tau_+)$.

The activation is therefore zero on $[T,\tau_+)$, so the output gradient
vanishes. The realized kink coefficient is zero almost everywhere, and all
parameters are constant on this interval. If $\tau_+<\infty$, every noncentral
preactivation would consequently retain its strictly negative value from time
$T$ at time $\tau_+$, contradicting the definition of $\tau_+$. Hence
$\tau_+=\infty$, and the parameter vector is constant for every $t\geq T$.
Under the deterministic convention $\textrm{ReLU}'(0)=0$, the vector field is
explicitly zero at time $T$.
\end{proof}

At the symmetric base point, every parameter block retains a two-level form.
Any centered two-level vector is a scalar multiple of
$q=e_0-p^{-1}\mathbf{1}$, so
\begin{equation}
 F_\zeta(\theta(T))=\frac2{p-1}
 \qquad(\zeta\neq0).
\label{app:dead-fraction}
\end{equation}
The hitting is transverse because $\dot s(T^-)\leq-2g_*/p^2<0$. Continuous
dependence of the smooth pre-hitting ODE and transversality imply that the
hitting time and terminal vector depend continuously on initial data near the
base point. The strict negativity of all other gates and positivity of $g(T)$
persist. Since there are finitely many frequency classes and the denominator
of $F_\zeta$ is nonzero at the base point, a sufficiently small open
neighborhood satisfies
\[
 \max_{\zeta\neq0}F_\zeta(\theta_\infty)<1.
\]
Together with Lemma~\ref{lem:app-no-continuation}, this proves
Theorem~\ref{thm:app-dead}.

\subsection{Smooth dead-zone activations}

Let $\sigma_\epsilon$ be defined by \eqref{app:sigma}. It is $C^\infty$,
vanishes on $(-\infty,0]$, agrees with ReLU on $[\epsilon,\infty)$, and for
$s>0$ satisfies
\begin{equation}
 \sigma_\epsilon'(s)
 =\psi(s/\epsilon)+\frac{s}{\epsilon}\psi'(s/\epsilon)>0,
 \qquad
 \frac{\sigma_\epsilon(s)}{\sigma_\epsilon'(s)}\leq s.
\label{app:sigma-properties}
\end{equation}
Use initial data of the form \eqref{app:dead-u}--\eqref{app:dead-w}, so only
the central example has positive preactivation. Write
\[
 s=u(0)+v(0),
 \qquad
 h=\sigma_\epsilon(s),
 \qquad
 g=\mu-w(0).
\]
The central-only region is invariant: $u(0)$ and $v(0)$ decrease by the same
amount, all cross gates become more negative, and all off-cross gates remain
unchanged. The dynamics are
\begin{align}
 \dot s&=-\frac2{p^2}\sigma_\epsilon'(s)D(h,w),
 &\label{app:smooth-sdot}\\
 \dot g&=-\frac{\sigma_\epsilon(s)}{p^2}(1-q_0).
 &\label{app:smooth-gdot}
\end{align}
Convexity of the log-partition function in $h$ gives
\begin{equation}
 D(h,w)\geq g.
\label{app:smooth-D}
\end{equation}
Using $s$ as the independent variable,
\[
 \frac{\partial g}{\partial s}
 =\frac{\sigma_\epsilon(s)(1-q_0)}
 {2\sigma_\epsilon'(s)D(h,w)}
 \leq\frac{\sigma_\epsilon(s)}
 {2g\sigma_\epsilon'(s)}.
\]
Therefore
\begin{equation}
 \frac{\partial(g^2)}{\partial s}
 \leq\frac{\sigma_\epsilon(s)}{\sigma_\epsilon'(s)}.
\label{app:smooth-g2}
\end{equation}
Define
\begin{equation}
 J(s_0):=\int_0^{s_0}
 \frac{\sigma_\epsilon(r)}{\sigma_\epsilon'(r)}\ d r.
\label{app:J}
\end{equation}
By \eqref{app:sigma-properties}, $J(s_0)\leq s_0^2/2$. If
\begin{equation}
 g(0)^2>J(s_0),
\label{app:smooth-condition}
\end{equation}
then
\begin{equation}
 g(t)^2\geq g(0)^2-J(s_0)=:g_*^2>0.
\label{app:smooth-glower}
\end{equation}
Thus $s(t)$ decreases. Its limit must be zero; otherwise
\eqref{app:smooth-sdot} and \eqref{app:smooth-D} would give
$\dot s\leq-c<0$ eventually. Moreover,
\begin{equation}
 \int_0^\infty\sigma_\epsilon(s(t))\ dt
 \leq\frac{p^2}{2g_*}
 \int_0^{s_0}\frac{\sigma_\epsilon(r)}{\sigma_\epsilon'(r)}\ dr
 <\infty.
\label{app:smooth-integrable}
\end{equation}
The output velocities are bounded by a constant times
$\sigma_\epsilon(s(t))$, so $w(t)$ converges. Since $u(0)-v(0)$ is constant
and $u(0)+v(0)=s(t)\to0$, both $u$ and $v$ converge. Hence
$\theta(t)\to\theta_\infty$.

At the symmetric base point, two-level symmetry is preserved, so every
centered parameter block is a scalar multiple of $q$ and
\[
 F_\zeta(\theta_\infty)=\frac2{p-1}.
\]
The terminal map is continuous near the base point. Indeed, fix a small level
$\delta>0$. The hitting time of $s=\delta$ is transverse, and the smooth ODE
depends continuously on initial data up to that time. The remaining tail is
uniformly small because
\[
 \int_{\{s\leq\delta\}}\sigma_\epsilon(s(t))\ d t
 \leq\frac{p^2}{2g_*}
 \int_0^\delta\frac{\sigma_\epsilon(r)}{\sigma_\epsilon'(r)}\ d r
 \longrightarrow0
\]
as $\delta\downarrow0$, while the remaining changes in $u(0)$ and $v(0)$
are at most $\delta/2$. Shrinking to a small open neighborhood preserves a
strict multifrequency gap. This proves Theorem~\ref{thm:app-smoothing}.

\subsection{One-step death under full-batch gradient descent}

Fix $\eta>0$ and use the convention $\textrm{ReLU}'(0)=0$. Start from a symmetric
base point of the form \eqref{app:dead-u}--\eqref{app:dead-w}, with only the
central gate active. Let
\[
 s_0=u_0(0)+v_0(0)>0,
 \qquad
 g_0=\mu_0-w_0(0)>0.
\]
The first full-batch update gives
\begin{equation}
 s_1=s_0-\frac{2\eta}{p^2}D(s_0,w_0).
\label{app:discrete-update}
\end{equation}
By \eqref{app:D-lower}, $D(s_0,w_0)\geq g_0$. Choose
\begin{equation}
 0<s_0<\frac{2\eta g_0}{p^2}.
\label{app:discrete-condition}
\end{equation}
Then $s_1<0$. Each row-cross or column-cross gate decreases during the first
update, while every off-cross gate is unchanged. Hence all gates are strictly
negative at step $1$. The gradient vanishes at every later step, so
\[
 \theta_n=\theta_1\qquad(n\geq1).
\]
At the symmetric base point, the update preserves the two-level form of each
parameter block, and therefore
\[
 F_\zeta(\theta_1)=\frac2{p-1}
 \qquad(\zeta\neq0).
\]
All gate inequalities are strict, and the one-step update is continuous on a
neighborhood containing no initial kink. A sufficiently small open
neighborhood therefore has the same one-step death and a strict
multifrequency terminal gap. This proves Theorem~\ref{thm:app-discrete}.
\bibliography{bibliography}

\end{document}